\numberwithin{equation}{section}
\theoremstyle{plain}
\newtheorem{thm}{thm}[section]
\newtheorem{theorem}[thm]{Theorem}
\newtheorem{lemma}[thm]{Lemma}
\newtheorem{proposition}[thm]{Proposition}
\theoremstyle{definition}
\newtheorem{remark}[thm]{Remark}
\newtheorem{definition}[thm]{Definition}
\newtheorem{defn-thm}[thm]{Definition-Theorem}
\newcommand{\shO}{\mathcal{O}}
\newcommand{\gr}{\text{gr}}
\newcommand{\dbar}{\overline{\partial}}
\newcommand{\pa}{\partial}
\newcommand{\Id}{\text{Id}}
\newcommand{\DR}{\text{DR}}
\newcommand{\MHM}{\text{MHM}}
\newcommand{\HM}{\text{HM}}
\newcommand{\h}{\text{H}}
\newcommand{\snc}{\text{SNC}}
\begin{document}
\title{On the Kodaira--Saito vanishing theorem of weakly ample divisor}
\makeatletter
\let\uppercasenonmath\@gobble% disables title uppercase
\let\MakeUppercase\relax% disables author uppercase
\let\scshape\relax% disables section smallcaps
\makeatother

\author{Yongpan Zou}

\address{Yongpan Zou, Graduate School of Mathematical Science, The University of Tokyo, 3-8-1 Komaba, Meguro-Ku, Tokyo 153-8914, Japan} \email{598000204@qq.com}

\subjclass[2020]{Primary 14F17; Secondary 14D07}
\keywords{vanishing theorem, weakly ample divisor.}

\maketitle

\begin{abstract}
On smooth projective variety, for a reduced effective divisor which is weakly ample in the sense of cohomology, we introduce a Kadaira--Saito vanishing theorem for it. 
%For a given compact K\"ahler manifold and a simple normal crossing divisor which is the support of a $k$-ample divisor. We obtain a logarithmic vanishing theorem for the unitary local system defined on the complement of the divisor.
\end{abstract}

\maketitle
 %\tableofcontents

\section{Introduction}

One important topic in algebraic geometry is the vanishing of the cohomology group of sheaves. The classical Cartan--Serre--Grothendieck theorem (see \cite[Theorem 1.2.6]{Laz}) says that a line bundle $L$ on compact complex manifold $X$ is ample if and only if for any coherent sheaf $\mathcal F$, there exist a positive integer $m_0=m_0(X,\mathcal F,L)$ such that:
$$
 \h^{q}(X,\mathcal F\otimes L^{\otimes m})=0, \quad \text{for any} \quad q > 0,\quad m\geq m_0.
$$
On the other hand, the ampleness is equivalent to the positivity of the metrical curvature on a compact complex manifold, due to the Kodaira embedding theorem. From the point of view of the cohomology group, we can generalize the concept of ampleness.

\begin{definition}
A line bundle $L$ over a smooth projective variety or compact K\"ahler manifold $X$ is called \emph{$k$-ample}, if for any coherent sheaf $\mathcal{F}$, there exists a positive integer $m_0=m_0(X,L,\mathcal{F})$  such that:
$$
\h^q(X,\mathcal{F}\otimes L^{\otimes m})=0, \quad \text{for any} \quad q > k,\quad
m\geq m_0.
$$
\end{definition} \label{defn1}
A divisor $D$ is called \emph{$k$-ample} if $\mathcal{O}_X(D)$ is a $k$-ample line bundle. The $0$-ample is equivalent to the usual definition of ampleness. For the usual ample line bundle $L$ and the canonical bundle $K_X$ on $X$, we have the Kodaira vanishing as follows:
\begin{align*} 
    \h^q(X,K_X \otimes  L)=0,\quad \text{for}\quad q > 0.
\end{align*}
It is natural to ask if the Kodaira vanishing theorem holds for $k$-ample line bundle $F$, i.e., do we have the next?
\begin{align}\label{kodai}
    \h^q(X,K_X \otimes F)=0,\quad \text{for}\quad q > k.
\end{align}
However, for the $k$-ample line bundle, the Kodaira vanishing is not true anymore, even when $X$ is projective. Ottem constructs a counterexample in \cite[Section $9$]{Ott}, there exists a projective threefold $X$ and a 1-ample line bundle L such that $H^2(X, K_X\otimes L) \neq 0$. If we assume moreover $F=\mathcal O_X(D)$ for some simple normal crossing divisor ($\snc$ for simplicity), then the above Kodaira vanishing theorem \eqref{kodai}, and the logarithmic Akizuki--Nakano vanishing theorem are both valid, see \cite{EsVi86, Liu, Liu1} for more details.
For a reduced effective divisor, the Hodge ideal introduced by Musta\c{t}\u{a}--Popa in \cite{MP19} is a powerful tool to investigate the singularity of divisors. We find the next vanishing theorem holds:
\begin{theorem} \label{main1}
    Let $X$ be a smooth projective variety, and $D$ is a reduced effective divisor on $X$. If $D$ is the support of some effective $k$-ample divisor, then one have
\begin{equation*}
\rm H^q(X,K_X \otimes \mathcal{O}_X(D) \otimes \mathcal{J}((1-\epsilon)D))=\rm H^q(X,K_X \otimes \mathcal{O}_X(D) \otimes \mathcal I_0(D))=0,\quad \text{for}\quad q > k.
\end{equation*}
\end{theorem}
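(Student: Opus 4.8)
The plan is to realize the sheaf $K_X\otimes\mathcal O_X(D)\otimes\mathcal I_0(D)$ as the bottom graded piece of the Hodge‑filtered de Rham complex of the localization module $\mathcal O_X(*D)$, and then to feed a weakly‑ample Artin--Grothendieck vanishing into Saito's strictness. First I would dispose of the equality of the two cohomology groups: since $D$ is reduced, the identity $\mathcal I_0(D)=\mathcal J((1-\epsilon)D)$ for $0<\epsilon\ll 1$ (Musta\c{t}\u{a}--Popa) shows the two sheaves literally coincide, so it suffices to prove $H^q(X,K_X\otimes\mathcal O_X(D)\otimes\mathcal I_0(D))=0$ for $q>k$.

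Next, set $n=\dim X$, $U=X\setminus D$ and $j:U\hookrightarrow X$, and let $M\in\MHM(X)$ be the mixed Hodge module underlying $j_*\mathbb Q_U^H[n]$, whose underlying filtered $\mathcal D$-module is $\mathcal O_X(*D)$ with $F_p\mathcal O_X(*D)=\mathcal O_X((p+1)D)\otimes\mathcal I_p(D)$ and $F_{-1}\mathcal O_X(*D)=0$. Because $F_{<0}=0$, in the complex $\gr^F_{-n}\DR(M)$ every term $\Omega^j_X\otimes\gr^F_{-n+j}\mathcal O_X(*D)$ with $j<n$ vanishes, leaving the single sheaf $\gr^F_{-n}\DR(M)\cong\omega_X\otimes F_0\mathcal O_X(*D)=K_X\otimes\mathcal O_X(D)\otimes\mathcal I_0(D)$ placed in degree $0$. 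Thus the desired statement is exactly $\mathbb H^q(X,\gr^F_{-n}\DR(M))=0$ for $q>k$.

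To obtain it I would run the Kodaira--Saito mechanism. Grothendieck's algebraic de Rham theorem identifies the abutment $\mathbb H^i(X,\DR(M))\cong H^{i+n}(U,\mathbb C)$. The positivity hypothesis enters precisely here: since $D=\mathrm{Supp}(A)$ with $\mathcal O_X(A)$ a $k$-ample line bundle, $U$ plays the role that an affine variety plays in the ample case, and I would establish $H^m(U,\mathbb C)=0$ for $m>n+k$ (which collapses to affineness when $k=0$). Hence $\mathbb H^i(X,\DR(M))=0$ for $i>k$. Saito's theorem that the Hodge–to–de Rham spectral sequence $E_1^{p,q}=\mathbb H^{p+q}(X,\gr^F_{-p}\DR(M))\Rightarrow\mathbb H^{p+q}(X,\DR(M))$ degenerates at $E_1$ then exhibits each $\mathbb H^i(X,\gr^F_{-p}\DR(M))$ as a subquotient of $\mathbb H^i(X,\DR(M))$; therefore every graded piece, and in particular the bottom one $\gr^F_{-n}\DR(M)$, vanishes in degrees $i>k$, which is the claim.

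The main obstacle is the vanishing $H^m(X\setminus\mathrm{Supp}(A),\mathbb C)=0$ for $m>n+k$, i.e. the assertion that the complement of the support of an effective $k$-ample divisor behaves cohomologically like a space of cohomological dimension $\le n+k$. This is the genuine replacement for ``the complement of an ample divisor is affine,'' and it is exactly where $k$-ampleness rather than ampleness is spent; I expect to extract it either from Andreotti--Grauert theory via the $q$-convexity attached to a $k$-ample bundle, or directly from Ottem--Totaro's cohomological characterization of $q$-ampleness. I note that the more naive route---passing to a log resolution $\pi\colon Y\to X$ and applying a $k$-ample Kawamata--Viehweg vanishing to $\omega_Y\otimes\pi^*\mathcal O_X(D)\otimes\mathcal O_Y(-\lfloor(1-\epsilon)\pi^*D\rfloor)$---appears to fail, because $k$-ampleness degrades under the birational pullback $\pi^*$ by the dimension of the exceptional fibres; the Hodge‑theoretic argument above, which never leaves $X$, therefore seems essential.
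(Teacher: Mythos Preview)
Your strategy coincides with the paper's: identify $K_X\otimes\mathcal O_X(D)\otimes\mathcal I_0(D)$ with $\gr^F_{-n}\DR(j_*\mathbb Q_U^{\rm H}[n])$, establish vanishing of the total de Rham hypercohomology from $k$-ampleness, and conclude via $E_1$-degeneration. The paper obtains Theorem~\ref{main1} as the special case $\mathcal M=\mathbb Q_U^{\rm H}[n]$ of Theorem~\ref{main2}, whose proof passes through a log resolution $\mu\colon Y\to X$ and the filtered comparison $\gr_p\DR(\mathcal M_0,\log E)\simeq\gr_p\DR_Y(i_*\mathcal M)$; your argument is a legitimate shortcut for the trivial VHS that stays on $X$ and invokes Saito's strictness directly.

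Two remarks. First, the ``main obstacle'' $H^m(U,\mathbb C)=0$ for $m>n+k$ is lighter than you suggest: it follows from the coherent vanishing $H^q(U,\mathcal F|_U)=\varinjlim_m H^q(X,\mathcal F(mA))=0$ for $q>k$ (this is exactly Lemma~\ref{mere} and Remark~\ref{keyrem} in the paper) via the de Rham spectral sequence $E_1^{p,q}=H^q(U,\Omega^p_U)\Rightarrow H^{p+q}(U,\mathbb C)$. No Andreotti--Grauert positivity is needed, and indeed it would be unavailable, since $k$-ample does not imply $k$-positive in general. Second, your closing comment overstates the difficulty of the log-resolution route: although $\mu^*A$ is no longer $k$-ample on $Y$, the paper's Remark~\ref{keyrem} observes that $Y\setminus E\cong U$, so the same coherent vanishing $H^q(Y,\mathcal F(*E))=0$ for $q>k$ holds on $Y$; this is precisely how the paper runs the argument through the resolution for arbitrary VHS.
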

\noindent Here $\mathcal J((1-\epsilon)D)$ is the multiplier ideal sheaf associated to the $\mathbb Q$-divisor $(1-\epsilon)D$ with $0 < \epsilon \ll 1$, and $\mathcal I_0(D)$ is the so called \emph{$0$-th Hodge ideal}. Moreover, one have $\mathcal J((1-\epsilon)D)=\mathcal I_0(D)$.

According to the definition of weakly ample divisor, it is natural to consider the germs of meromorphic functions $ \mathcal O_X(\ast D)= \bigcup_{k \ge 0} \mathcal{O}_X (kD)$ with poles along $D$. In Morihiko. Saito's mixed Hodge theory, this is a left $\mathcal D$-module underlies the mixed Hodge module $j_{\ast} \mathbb Q_U^{\rm H} [n]$, where $U = X \setminus D$ and $j: U \hookrightarrow X$ is the inclusion map. It, therefore, comes
with an attached Hodge filtration (increasing) $F_k \mathcal{O}_X(*D)$. The associated right $\mathcal D$-module is denoted by $K_X(\ast D):= K_X\otimes \mathcal O_X(\ast D)$. Saito shows that this filtration is contained in the pole order filtration, namely 
\begin{equation*}
F_k \mathcal{O}_X(*D) \subseteq \mathcal{O}_X \big( (k+1) D\big) \,\,\,\,\,\, {\rm for ~all} \,\,\,\, k \ge 0,
\end{equation*}
and
\begin{equation*}
F_{k-n} K_X(*D) \subseteq K_X \big( (k+1) D\big) \,\,\,\,\,\, {\rm for ~all} \,\,\,\, k \ge 0.
\end{equation*}
The inclusion above leads to defining for each $k \ge 0$ a coherent sheaf of ideals $I_k (D)$ by the formula
$$F_k \mathcal O_X(\ast D) = \mathcal{O}_X \big( (k+1) D\big)\otimes \mathcal I_k(D).$$
$$F_{k-n} K_X(\ast D) = K_X \big( (k+1) D\big)\otimes \mathcal I_k(D).$$
These are the definition of $k$-th Hodge ideals.
From the point of view of the mixed Hodge module, we shall study the Saito-type vanishing theorem for weakly ample divisors. Firstly we introduce the famous Saito vanishing for ample line bundle, it is a far generalization of Kodaira's vanishing theorem. Recently, there are $L^{2}$-approaches to the Saito Vanishing Theorem, see \cite{DH19} and \cite{Kim23} for details.
\begin{theorem}[Saito vanishing thorem]  \cite[Theorem $2.1$]{Schn16}\label{saito}
Let $X$ be a complex projective variety and let $\mathcal{M} \in \rm{MHM}(X)$ be a graded polarizable mixed Hodge module on $X$. For every ample line bundle $L$ on $X$, we have
		$$ \mathbb{H}^{k}\big( X, \rm gr^{F}_p \rm DR(\mathcal{M}) \otimes L \big) = 0 \qquad \text{for all} \quad k > 0, \quad p \in \mathbb{Z}.$$
\end{theorem}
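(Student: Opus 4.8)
The plan is to build the statement out of the two structural pillars of Saito's theory — the strictness of the Hodge filtration $F$ under morphisms and under projective direct images of mixed Hodge modules — together with a single genuine input of positivity. A useful orientation is the special case $\mathcal{M} = \mathbb{Q}_X^{\rm H}[n]$ on a smooth projective $X$ of dimension $n$: there the graded pieces $\gr^F_p \DR(\mathcal{M})$ reduce to shifted sheaves of differential forms, and the asserted vanishing becomes the Akizuki--Nakano vanishing theorem $\h^q(X, \Omega_X^a \otimes L) = 0$ for $a + q > n$. I would treat this, together with its analogue for polarizable variations of Hodge structure, as the known base case, and try to reduce an arbitrary graded polarizable mixed Hodge module to it.

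First I would reduce to the case of a polarizable \emph{pure} Hodge module. Every $\mathcal{M} \in \MHM(X)$ carries a finite weight filtration $W_\bullet \mathcal{M}$ by sub-objects in $\MHM(X)$ whose graded quotients $\gr^W_\ell \mathcal{M}$ are polarizable pure Hodge modules. Since every morphism of mixed Hodge modules is strictly compatible with $F$, the functor $\mathcal{M} \mapsto \gr^F_p \DR(\mathcal{M})$ is exact: it sends a short exact sequence in $\MHM(X)$ to a distinguished triangle in the bounded derived category of coherent $\shO_X$-modules. Tensoring with the locally free sheaf $L$ preserves such triangles, so the long exact sequence in hypercohomology reduces the vanishing for $\mathcal{M}$ to the same vanishing for each $\gr^W_\ell \mathcal{M}$. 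Hence I may assume $\mathcal{M}$ is a polarizable pure Hodge module, and by Saito's decomposition by strict support I may further assume it has strict support on a single irreducible $Z \subseteq X$.

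For the core vanishing I would use the ampleness of $L$ geometrically, via Artin--Grothendieck vanishing. For $m \gg 0$ a general member $D \in |L^{\otimes m}|$ is smooth with affine complement $U = X \setminus D$, and a perverse sheaf on the affine variety $U$ has no cohomology in positive degrees. The localization $\mathcal{M}(\ast D) = j_\ast j^\ast \mathcal{M}$ along $D$ carries a Hodge filtration whose associated graded de Rham complex, by strictness under the open immersion $j : U \hookrightarrow X$, computes $Rj_\ast$ of the de Rham complex of $\mathcal{M}|_U$ and simultaneously encodes the coherent twists by the powers $\shO_X\big((k+1)D\big)$ through the Hodge ideals $\mathcal{I}_k(D)$ introduced above. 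In this way the higher hypercohomology of the twisted graded de Rham complex is governed by Artin vanishing on $U$, which kills it in the relevant degrees.

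I expect the main obstacle to be the passage from these high twists back to a single copy of $L$. The construction above naturally produces vanishing for twists by powers of $\shO_X(D) = L^{\otimes m}$, so isolating the bundle $L$ itself, uniformly in the Hodge index $p$, requires a descending induction that compares the filtered pieces using the exact sequences relating $\mathcal{M}$, $\mathcal{M}(\ast D)$ and the restriction to $D$; this bookkeeping is the genuine technical heart. The reason I favour the Artin route over the seemingly more direct one is exactly here: one could instead take a projective resolution $f : \widetilde{Z} \to Z \hookrightarrow X$ and use the decomposition theorem together with the strictness isomorphism
\begin{equation*}
\gr^F_p \DR_X(f_\ast \mathcal{N}) \;\cong\; Rf_\ast \, \gr^F_p \DR_{\widetilde{Z}}(\mathcal{N})
\end{equation*}
to transport the problem to the smooth $\widetilde{Z}$, but then $f^\ast L$ is only nef and big rather than ample, so Akizuki--Nakano does not apply verbatim and the honest positivity is lost. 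Recovering ampleness through the affine complement, and then descending cleanly to $L$, is where I would concentrate the real effort.
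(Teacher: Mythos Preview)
The paper does not prove this theorem. Theorem~\ref{saito} is stated with attribution to \cite[Theorem~$2.1$]{Schn16} and is used only as background motivation; no argument for it appears anywhere in the present text. There is therefore nothing in the paper against which to compare your proposal.

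That said, your sketch follows the strategy of the cited source rather faithfully: reduction to the pure case via the weight filtration and strictness, then a weak-Lefschetz style argument using a general $D \in |L^{\otimes m}|$ with affine complement, Artin vanishing on $U$, and a descending induction to pass from high twists back to $L$. One small slip: your invocation of the Hodge ideals $\mathcal{I}_k(D)$ at this point is misplaced. Those ideals, as defined in \cite{MP19} and recalled later in the present paper, pertain specifically to the filtration on $\mathcal{O}_X(\ast D)$ for a reduced divisor $D$, i.e.\ to the case $\mathcal{M} = \mathbb{Q}_X^{\rm H}[n]$; they do not package the Hodge filtration on $j_\ast j^\ast \mathcal{M}$ for a general mixed Hodge module $\mathcal{M}$. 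In Schnell's argument the relevant input at that step is instead the comparison of $\gr^F_p \DR$ for $\mathcal{M}$, for $\mathcal{M}(\ast D)$, and for the non-characteristic restriction $i^\ast \mathcal{M}$ to $D$, which is what drives the induction on dimension. Replace the Hodge-ideal sentence with that comparison and your outline matches the cited proof.
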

\noindent Here $\DR(\mathcal M)$ is the de Rham complex of $\mathcal M$:
$$  \DR_{X}(\mathcal M) = \left [\mathcal M \to \Omega_{X}^{1} \otimes\mathcal M \to \cdots \to \Omega_{X}^{n} \otimes \mathcal M\right] [n].$$
On this note, we established the following vanishing theorem for weakly ample divisors.

\begin{theorem}[Main Theorem] \label{main2}
		Let $X$ be a smooth projective variety and $D$ be a $k$-ample effective Cartier divisor on $X$. Suppose that $U = X \setminus D$ and consider the open inclusion $j \colon U \hookrightarrow X$. Let $\mathcal M$ be a variation of Hodge structures on $U$ that we identify with the corresponding pure polarizable Hodge module. We have the following vanishing:
	\begin{align*}
		&\mathbb H^{l}(X, \rm{gr}^F_{p}\rm DR_{X}(j_{\ast} \mathcal M)) = 0 
		\end{align*}
		for all $l > k$, $p \in \mathbb Z$.
	\end{theorem}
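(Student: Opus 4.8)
The plan is to reduce the vanishing for $j_*\mathcal{M}$ to the ample case covered by Saito's theorem (Theorem~\ref{saito}), by exploiting the $k$-ampleness of $D$ together with the pole-order/Hodge filtration structure on $\mathcal{O}_X(*D)$ described above. The key conceptual point is that $k$-ampleness is a cohomological positivity condition: it controls cohomology in degrees $> k$ the way ordinary ampleness controls degrees $> 0$. So I expect the theorem to follow from Saito's theorem "shifted up by $k$," provided I can present the relevant graded de Rham pieces of $j_*\mathcal{M}$ in terms of twists by powers of $\mathcal{O}_X(D)$ to which the $k$-ample hypothesis applies.

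**First I would** unwind the left-hand side using the concrete description of the Hodge filtration. For the structure-sheaf case $\mathcal{M}=\mathbb{Q}^H_U[n]$, the graded pieces $\mathrm{gr}^F_p \mathrm{DR}_X(j_*\mathcal{M})$ are, by Saito's theory and the Hodge-ideal formalism recalled in the excerpt, complexes built from the sheaves $\mathcal{O}_X((k+1)D)\otimes \mathcal{I}_k(D)$ and their wedge-twists by $\Omega^\bullet_X$. I would write down this complex explicitly and observe that each term is (up to a coherent sheaf independent of the positivity) a twist of a fixed coherent sheaf by a power $\mathcal{O}_X(mD)$ with $m$ growing with the filtration level. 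For a general variation of Hodge structure $\mathcal{M}$ on $U$, the analogous statement holds because $j_*\mathcal{M}$ has a well-behaved Hodge filtration along $D$, and its graded de Rham complex is again expressed through twists of coherent sheaves by $\mathcal{O}_X(mD)$.

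**Next I would** invoke the $k$-ampleness of $D$ directly. By Definition~\ref{defn1}, for any coherent sheaf $\mathcal{F}$ on $X$ there is $m_0$ with $\mathrm{H}^q(X,\mathcal{F}\otimes\mathcal{O}_X(mD))=0$ for all $q>k$ and $m\ge m_0$. The subtlety is that the terms appearing in $\mathrm{gr}^F_p\mathrm{DR}_X(j_*\mathcal{M})$ involve a \emph{fixed} power of $\mathcal{O}_X(D)$ tied to $p$, not an arbitrarily large one, so a naive application of $k$-ampleness to each individual term is not enough. To get around this I would pass to a hyperplane-section / Lefschetz-type argument or use a covering/cyclic-cover trick: build an auxiliary space on which $\mathcal{O}_X(D)$ pulls back to (a root of) an honest ample bundle, apply Saito's ample vanishing (Theorem~\ref{saito}) there, and descend. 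Alternatively, one can use the compatibility of the de Rham functor with the filtered direct image and run an inductive spectral-sequence argument in which the $k$-ample vanishing kills all contributions in cohomological degree $> k$.

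**The hard part will be** bridging the gap between the \emph{asymptotic} nature of $k$-ampleness (vanishing only for $m\ge m_0$) and the need to control $\mathrm{gr}^F_p\mathrm{DR}$, whose terms carry a bounded, $p$-dependent twist. Ordinary ampleness in Saito's theorem requires no such asymptotics because the ample bundle itself supplies the positivity; here $k$-ampleness is defined only up to high tensor powers. The resolution I anticipate is to use the strictness of the Hodge filtration on the mixed Hodge module $j_*\mathcal{M}$ together with a degeneration or Fourier--Mukai-type argument that replaces the fixed twist by an arbitrarily large one without changing the relevant cohomology in degrees $>k$; establishing this compatibility rigorously, and checking it respects the filtration $F_\bullet$, is where the real work lies.
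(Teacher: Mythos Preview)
Your proposal correctly isolates the central difficulty --- the asymptotic nature of $k$-ampleness versus the bounded twist appearing in $\gr^F_p\DR_X(j_*\mathcal M)$ --- but the resolutions you suggest (cyclic covers to make $\mathcal O_X(D)$ ample, Lefschetz sections, a Fourier--Mukai trick) either do not apply here or are too vague to constitute a proof. In particular, there is no mechanism by which a cyclic cover converts a $k$-ample divisor into an ample one, so invoking Theorem~\ref{saito} on the cover has no traction; and ``replacing the fixed twist by an arbitrarily large one without changing the relevant cohomology'' is precisely the point that needs an argument, not an assumption.

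The paper's proof bypasses this obstacle by a route you do not mention. One first takes a log resolution $\mu:Y\to X$ so that $E=(\mu^*D)_{\mathrm{red}}$ is SNC, and uses Saito's logarithmic comparison (Proposition~\ref{prop-log-comparison}) together with compatibility of $\gr\,\DR$ with proper pushforward to reduce to showing $\mathbb H^l(Y,\gr_p\DR_Y(\mathcal M_0,\log E))=0$ for $l>k$. The key step on $Y$ is then not to stay with bounded twists at all, but to pass to the \emph{meromorphic} de Rham complex $\DR(\mathcal V_\alpha,*E)$: by Lemma~\ref{mere} and Remark~\ref{keyrem}, $k$-ampleness of $D$ forces $H^q(Y,\Omega^p_Y(*E)\otimes\mathcal V_\alpha)=0$ for $q>k$ (cohomology commutes with the direct limit $\varinjlim\mathcal O_Y(mE)$, which is exactly where the ``asymptotic'' hypothesis is absorbed). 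A spectral sequence then gives $\mathbb H^s(Y,\DR(\mathcal V_\alpha,*E))=0$ for $s>n+k$. The quasi-isomorphism $\DR(\mathcal V_\alpha,\log E)\hookrightarrow\DR(\mathcal V_\alpha,*E)$ for $\alpha\ge 0$ (Lemma~\ref{quai}) transfers this to the logarithmic complex, and finally $E_1$-degeneration of the Hodge filtration spectral sequence pushes the vanishing down to each graded piece. These three ingredients --- the direct-limit argument on $\mathcal O(*E)$, the log/meromorphic comparison, and $E_1$-degeneration --- are the genuine content, and none of them appears in your outline.
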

The above Theorem \ref{main1} is the direct corollary of our main Theorem \ref{main2} when
 $\mathcal M = \mathbb Q_U^{\rm H} [n]:= (\mathcal O_U, F, \mathbb Q_U[n])$ associated to the trivial variation of Hodge structure $\mathbb Q$ on $X$. Indeed, in this case, we have $j_{\ast} \mathcal M=\mathcal O_X(\ast D)$ and $\gr^F_{-n} \DR (\mathcal O_X(\ast D)) \simeq K_X \otimes F_0\mathcal{O}_X(\ast D)= K_X\otimes \mathcal{O}_X(D) \otimes \mathcal{I}_0(D)$. Moreover, if the divisor is SNC, we can calculate each $\rm{gr}^F_{p}\rm DR_{X}(j_{\ast} \mathcal M)$ and deduce the logarithmic Akizuki--Nakano vanishing theorem for weakly ample divisor as in \cite{Liu}.

%
%In the case of ample line bundle, i.e., the line bundle $L$ is ample, there are many classical vanishing theorems. Firstly the Kodaira vanishing theorem shows that $H^q(X, K_X\otimes L) =0$ for any $q>0$. For the holomorphic $p$-forms, Akizuki--Nakano obtain the next vanishing theorem, i.e., $H^q(X, \Omega^p_X \otimes L) =0$ for any $p+q >n$.
%For logarithmic $p$-forms with respect to simple normal crossing divisor $D$, in \cite{Nori78}, Norimatsu obtains the following logarithmic vanishing theorem on compact K\"ahler manifold $$H^q(X, \Omega^p_X(\log D)\otimes L) =0 \quad\text{for any} \quad p+q >n.$$
%
%When the divisor $D$ is $k$-ample and the line bundle is trivial, Liu--Wan--Yang obtains the following vanishing theorem.
%\begin{theorem} \cite{Liu}
%Let $X$ be a compact K\"ahler manifold, and $D$ is a simple normal crossing divisor on $X$. If $D$ is the support of some effective $k$-ample divisor, then one have
%$$
%H^q(X,\Omega^p_X(\log D))=0,\quad \text{for}\quad p+q > n+k.
%$$
%\end{theorem}

\begin{remark}
 There is an original differential geometric proof of the
vanishing theorem by using positive metrics on ample
line bundles. There is also a natural generalization for positive
line bundles. A line bundle $L$ over a compact complex manifold $X$
is called \emph{$k$-positive}, if there exists a smooth Hermitian
metric $h$ on $L$ such that the Chern curvature
$-\sqrt{-1}\pa\dbar\log h$ has at least $(n-k)$ positive
eigenvalues at any point on $X$. In $1962$, Andreotti and Grauert
proved in \cite[Theoreme 14]{AG62} that $k$-positive line bundles
are always $k$-ample (see also \cite[Proposition 2.1]{DPS}). In
\cite{DPS}, Demailly, Peternell and Schneider proposed a converse to
the Andreotti-Grauert theorem and asked whether $k$-ample line
bundles are $k$-positive. In some special cases this is true, see \cite{Dem11, Mat, Yang} for more details. However, there exist higher dimensional counterexamples in the range
$\frac{\dim X}{2}-1<q<\dim X-2$, constructed by Ottem in \cite{Ott}. For the vanishing theorem of $k$-positive line bundle, the reader can also consult \cite[Proposition $5.4$]{Mat14}.
\end{remark}

\noindent\textbf{Acknowledgements}:
The author would like to thank his advisor Professor Shigeharu. Takayama for guidance and encouragement. The author is supported by the Japan Society for the Promotion of Science.

\section{Preliminaries}
In this section, we introduce some standard definitions and facts. First we let $X$ be a compact complex manifold with dimension $\dim_{\mathbb{C}}X=n$, with simple normal crossing divisor $D=\sum_{i=1}^r D_i$. For every point $p\in X$, we can choose a coordinate neighborhood of $p$ such that locally $D =\{\prod_{i=1}^k z_i = 0\}$. We may focus on the sheaves of germs of $p$-forms which have logarithmic poles along the divisor $D$, we denote it by $\Omega^p_X(\log D)$. On the open set $V$ on $X$, sections of $\Omega_X^1(\log D)$ are given by:
\begin{align*}
\Omega_X^1(\log D):= \mathcal{O}_X \frac{dz_1}{z_1} \oplus \cdots \oplus \frac{dz_k}{z_k} \oplus \mathcal{O}_X dz_{k+1} \oplus\cdots\oplus\mathcal{O}_X dz_{n}.
\end{align*}

\noindent This describes the holomorphic vector bundle $\Omega_X^1(\log D)$ on $X$. Then, for any $k\geq 0$, the vector bundle $\Omega_X^k(\log D)$ is the $k$-th exterior power,
\begin{align*}
\Omega_X^k(\log D):= \bigwedge^k \Omega_X^1(\log D).
\end{align*}
The next notation
\begin{align*}
    \Omega^p_X(*D) := \bigcup_{k\geq 0}\Omega^p_X(k D) = \lim_{\rightarrow} \Omega^p_X(kD).
\end{align*}
represent the germs of meromorphic $p$-forms, which is holomorphic on $X\setminus D$, and have arbitrary (finite) order poles along $D$.
If $D'$ be another divisor with the same supports as $D$, i.e., $\text{Supp}(D')=D$, we have
\begin{align}\label{1.0}
\Omega^p_X(*D)=\Omega^p_X(*D').
\end{align}

\noindent We say a SNC divisor $D$ is the support of some effective $k$-ample divisor, if there exists a $k$-ample divisor $D'$ such that $\text{supp}(D')=D$.

%\noindent A divisor $D'$ is called \emph{ $k$-ample} if
%$\mathcal{O}_X(D')$ is a  $k$-ample line bundle.

We can only define the logarithmic forms for SNC divisors. To a reduced effective divisor $D$ on $X$, one associates the left $\mathcal{D}$-module of functions with poles along $D$,
$$\mathcal{O}_X (*D) = \bigcup_{k \ge 0} \mathcal{O}_X (kD),$$
i.e. the localization of $\mathcal{O}_X$ along $D$. The association right $\mathcal D$-module is $K_X(\ast D)$.

\begin{lemma} \cite[Lemma 2.1]{Liu}\label{mere}
Let $X$ be a compact complex manifold with dimension $\dim_{\mathbb{C}}X=n$, and $D$ be an effective $k$-ample divisor. Let $\mathcal{F}$ be arbitrary vector bundle on $X$, we have:
\begin{align*}
H^q(X,\mathcal{O}_X(\ast D)\otimes \mathcal{F})=0, \quad \text{when}\quad q>k.
\end{align*}
\end{lemma}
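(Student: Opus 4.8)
The plan is to realize $\mathcal{O}_X(\ast D)\otimes\mathcal F$ as a filtered direct limit of coherent sheaves and then reduce the vanishing to the defining property of $k$-ampleness. By definition $\mathcal{O}_X(\ast D)=\bigcup_{m\ge 0}\mathcal{O}_X(mD)=\varinjlim_m \mathcal{O}_X(mD)$, the transition maps being the inclusions $\mathcal{O}_X(mD)\hookrightarrow\mathcal{O}_X((m+1)D)$ induced by a local equation of $D$. Tensoring with the vector bundle $\mathcal F$ is exact and commutes with the colimit, so
$$\mathcal{O}_X(\ast D)\otimes\mathcal F=\varinjlim_m\big(\mathcal{O}_X(mD)\otimes\mathcal F\big),$$
a filtered union of coherent subsheaves of the sheaf of meromorphic sections.

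The key step I would establish is that $H^q(X,-)$ commutes with this particular direct limit, namely
$$H^q\big(X,\mathcal{O}_X(\ast D)\otimes\mathcal F\big)=\varinjlim_m H^q\big(X,\mathcal{O}_X(mD)\otimes\mathcal F\big)\qquad\text{for all }q.$$
In the projective setting this is immediate, since on the underlying Noetherian space cohomology commutes with filtered colimits of quasi-coherent sheaves. For a general compact complex manifold I would instead fix a \emph{finite} cover $\mathcal U=\{U_i\}$ of $X$ by relatively compact Stein coordinate balls; because a finite intersection of Stein opens is again Stein, every $U_{i_0\cdots i_p}$ is Stein, so by Cartan's Theorem B together with Leray's theorem the \v{C}ech complex $\check C^\bullet(\mathcal U,-)$ computes sheaf cohomology. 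Over each relatively compact $U_{i_0\cdots i_p}$ a section with poles along $D$ has locally bounded pole order, and compactness of the closure upgrades this to a uniform bound, giving $\Gamma(U_{i_0\cdots i_p},\mathcal{O}_X(\ast D)\otimes\mathcal F)=\varinjlim_m\Gamma(U_{i_0\cdots i_p},\mathcal{O}_X(mD)\otimes\mathcal F)$. As $\mathcal U$ is finite, $\check C^\bullet(\mathcal U,-)$ is a finite complex of such colimits, and since filtered colimits are exact, passing to the cohomology of the complex yields the displayed commutation.

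With the commutation in hand the conclusion is short. Since $D$ is $k$-ample, the definition of $k$-ampleness applied to the coherent sheaf $\mathcal F$ furnishes an integer $m_0$ with $H^q(X,\mathcal{O}_X(mD)\otimes\mathcal F)=0$ for all $q>k$ and all $m\ge m_0$. Hence for $q>k$ the direct system $\{H^q(X,\mathcal{O}_X(mD)\otimes\mathcal F)\}_m$ is cofinally zero, so its colimit vanishes, and therefore $H^q(X,\mathcal{O}_X(\ast D)\otimes\mathcal F)=0$.

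The main obstacle is precisely the commutation of cohomology with the direct limit: it is the only nontrivial input. It is transparent over a Noetherian scheme but in the complex-analytic category requires the \v{C}ech argument above, resting on the finiteness of the Stein cover, the boundedness of pole orders on relatively compact opens, and the exactness of filtered colimits. Everything else is formal.
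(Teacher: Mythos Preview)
Your proof is correct and follows essentially the same strategy as the paper: both reduce to showing that a class in $H^q(X,\mathcal{O}_X(\ast D)\otimes\mathcal F)$ comes from some $H^q(X,\mathcal{O}_X(\ell D)\otimes\mathcal F)$ for finite $\ell$, and then invoke $k$-ampleness to kill it for $\ell\gg 0$. The only cosmetic difference is that the paper works with Dolbeault representatives (writing $\alpha=\dbar\beta$ with $\beta$ of bounded pole order) whereas you use a finite Stein \v{C}ech cover to make the commutation $H^q(X,\varinjlim\,-)=\varinjlim H^q(X,-)$ explicit; your version is arguably more carefully justified.
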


\begin{proof}
We use $[\bullet]$ and $[\bullet]_{\ell}$ to represent the class in $H^q(X,\mathcal{O}_X(\ast D)\otimes \mathcal{F})$ and $H^q(X,\mathcal{O}_X(\ell D)\otimes \mathcal{F})$. If $[\alpha]\in H^q(X,\mathcal{O}_X(\ast D)\otimes \mathcal{F})$, by the definition, there exist a integer $\ell_0> 0$ such that
\begin{align}\label{1.1}
    [\alpha]_{\ell}\in H^q(X,\mathcal{O}_X(\ell D)\otimes \mathcal{F}),\quad  \text{for} \quad \ell\geq \ell_0.
\end{align}
Since $\mathcal{O}_X(D)$ is $k$-ample, according to definition, there exist a integer $\ell_1$ such that
\begin{align}\label{1.2}
H^q(X,\mathcal{O}_X(\ell D)\otimes \mathcal{F})=0,\quad \text{for} \quad q>k,\quad
\ell\geq \ell_1.
\end{align}
 We choose $\ell \geq \max\{\ell_0, \ell_1\}$ and $q>k$, we know
\begin{align*}
\alpha=\dbar\beta
\end{align*}
for some $\beta\in A^{0,q-1}(X,\mathcal{O}_X(\ell D)\otimes \mathcal{F})\subset
A^{0,q-1}(X,\mathcal{O}_X(*D)\otimes \mathcal{F})$. Therefore when $q>k$, we know
\begin{align*}
H^q(X,\mathcal{O}_X(*D)\otimes \mathcal{F})=0.
\end{align*}
\end{proof}

\begin{remark} \label{keyrem}
    If $X$ is a smooth projective variety, let $D$ be an effective divisor on $X$ and set $U:=X\setminus D$. For any vector bundle $\mathcal{F}$ on $X$, we have (cf, \cite[Proposition $5.1$]{Ott})
    $$ H^i(U, \mathcal F|_U) = \lim_{\rightarrow} H^i(X, \mathcal{F}(m D)).
    $$
    Given a log resolution $f: Y \to X$ of the pair $(X, D)$ which $E:= (f^{\ast}D)_{\rm red}$ and set $V:=f^{-1}(U)$ and the inclusion $i: U\to Y$. Since $U$ and $V$ is isomorphism, we have
    \begin{align*}
        H^i(U, \mathcal F|_U) &\simeq H^i(V, \mathcal F|_V) = \lim_{\rightarrow} H^i(Y, \mathcal{F}(m E))\\
        &= H^i(Y, \lim_{\rightarrow}\mathcal{F}(m E))= H^i(Y, \mathcal{F}(\ast E))
    \end{align*}
Then if we assume that $D$ be a $k$-ample divisor, we have the vanishing 
 $$H^i(Y, \mathcal{F}(\ast E))=0$$
 for $i > k$.
\end{remark}
Next, we introduce Deligne's canonical extension of flat vector bundles.
This is a local problem, we can assume $X$ be a poly-disk $\Delta^{n}$ and let $U$ be $(\Delta^{\ast})^r \times \Delta^{n-r}$. Let $t_1,\cdots,t_n$ be the coordinate system on $X$, and let $D := X\setminus U$ be the simple normal crossing divisor, here $D_j=\{ t_j = 0\}, 1\leq j \leq r$ be the hypersurface. For convenience, we consider the case when $r=n$. Let $\mathcal{V}$ be the holomorphic vector bundle on $U$ with flat connection $\nabla: \mathcal{V} \to \Omega^1_U \otimes \mathcal{V}$, i.e., $\nabla\circ\nabla =0$. It is easy to see that the fundamental group of $U$ is isomorphic to $\mathbb{Z}^n$, and the fundamental group act on $\mathcal{V}$ by the parallel transport. The kernel of this flat connection $\mathcal{H}=\ker \nabla$ is a local system. The local system $\mathcal{H}$ has monodromy operators $T_1, \cdots, T_n$ with $T_j$ given by moving in a counterclockwise direction around the hyperplane $t_j=0$. We say $(\mathcal{V}, \nabla)$ has quasi-unipotent monodromy if each $T_j$ is quasi-unipotent
(i.e., $(T_j^p-\Id)^q=0$ for some $p,q\in \mathbb N$). Let $\alpha \in \mathbb R^n$ with $\alpha= (\alpha_1, \cdots, \alpha_n)$, we set the subset $I_{\alpha}$ of $\mathbb R^n$ to be $I_{\alpha}:= [-\alpha_1,-\alpha_1+1)\times \cdots \times [-\alpha_n,-\alpha_n+1)$.
Deligne proved the following classical result:

\begin{theorem} \label{Deligne}
Let $\mathcal{V}$ be the holomorphic vector bundle on $U$ with flat connection $\nabla$, and $(\mathcal{V},\nabla)$ have quasi-unipotent monodromy. Then for any $I_{\alpha}$, there exist the extension vector bundle $\mathcal{V}_{\alpha}$ of $\mathcal{V}$ on $X$ with the connection $\nabla$ extended on $X$ as
 $$
 \nabla: \mathcal{V}_{\alpha} \to \Omega^1_{X}(\log D) \otimes \mathcal{V}_{\alpha}
 $$
only have a logarithmic pole, and for each $j$, the eigenvalue of the residue operator
\[ \rm Res_{D_j}\nabla := t_j\nabla_{\frac{\partial}{\partial t_j}}: \mathcal{V}_{\alpha}|_{D_j} \to \mathcal{V}_{\alpha}|_{D_j} \]
include in $[-\alpha_j,-\alpha_j+1)$.
\end{theorem}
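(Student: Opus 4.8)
The plan is to reduce everything to a computation on the universal cover and to exploit that a flat bundle is completely determined by its monodromy representation. Working in the model case $X = \Delta^n$, $U = (\Delta^\ast)^n$, the fundamental group $\pi_1(U) \cong \mathbb{Z}^n$ is generated by the loops around the coordinate hyperplanes, and $(\mathcal{V}, \nabla)$ is recovered from the commuting monodromy operators $T_1, \ldots, T_n \in GL(V)$ acting on a fixed fiber $V$ of rank $m$. Passing to the universal cover via $t_j = e^{2\pi\xu\, \zeta_j}$, I would fix a multivalued horizontal frame $e = (e_1, \ldots, e_m)$, so that $\nabla e = 0$ and the analytic continuation of $e$ once around $D_j$ is $e \mapsto T_j e$.

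The heart of the construction is to manufacture a single-valued frame out of $e$ by twisting with a suitable multivalued automorphism. Since each $T_j$ is quasi-unipotent, its eigenvalues are roots of unity; writing $T_j = \exp(-2\pi\xu\,\Gamma_j)$, I would choose the branch of the matrix logarithm so that the eigenvalues of $\Gamma_j$ lie in the prescribed window $[-\alpha_j, -\alpha_j + 1)$. Because the $T_j$ commute, their logarithms can be taken simultaneously, so the $\Gamma_j$ form a commuting family of constant endomorphisms. I then set
$$\tilde{e} := \exp\Big(\sum_{j} \Gamma_j \log t_j\Big)\, e,$$
and define $\mathcal{V}_{\alpha}$ to be the $\shO_X$-module locally generated by $\tilde{e}$.

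The key verifications are that $\tilde{e}$ is single-valued and that $\nabla$ acquires only a logarithmic pole with the desired residue. Going once around $D_j$ sends $\log t_j \mapsto \log t_j + 2\pi\xu$ and $e \mapsto T_j e$, so $\tilde{e} \mapsto \exp(2\pi\xu\,\Gamma_j)\,T_j\,\tilde{e} = \tilde{e}$ by the choice $T_j = \exp(-2\pi\xu\,\Gamma_j)$; hence $\tilde{e}$ descends to a genuine holomorphic frame on $X$ and $\mathcal{V}_{\alpha}$ is a locally free extension of $\mathcal{V}$. A direct differentiation, using $\nabla e = 0$ and the commutativity of the $\Gamma_j$, gives
$$\nabla \tilde{e} = \Big(\sum_{j} \Gamma_j\, \frac{dt_j}{t_j}\Big)\,\tilde{e},$$
so $\nabla$ maps $\mathcal{V}_{\alpha}$ into $\Omega^1_X(\log D) \otimes \mathcal{V}_{\alpha}$ and $t_j \nabla_{\partial/\partial t_j}$ restricts on $D_j$ to $\Gamma_j|_{D_j}$, whose eigenvalues lie in $[-\alpha_j, -\alpha_j+1)$ by construction. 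The general case $r < n$ is identical: the trivial directions $t_{r+1}, \ldots, t_n$ carry no monodromy and contribute nothing to the twist.

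The main obstacle I anticipate is the consistent choice of the logarithms $\Gamma_j$. Quasi-unipotency only guarantees that the eigenvalues of $T_j$ are roots of unity, so to land the residue eigenvalues in the half-open \emph{real} interval $[-\alpha_j, -\alpha_j+1)$ I must route the unipotent (non-semisimple) part through the matrix logarithm, pin down the integer ambiguity modulo $\mathbb{Z}$, and keep the family $\{\Gamma_j\}$ simultaneously commuting so that the single exponential twist makes sense. Once this bookkeeping is in place, both the single-valuedness of $\tilde{e}$ and the logarithmic-pole computation are formal.
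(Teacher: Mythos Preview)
Your proposal is correct and matches the paper's treatment almost exactly. The paper states Theorem~\ref{Deligne} as Deligne's classical result without a self-contained proof, but in Section~3.1 it recalls the construction in detail: one writes $T_j = T_{j,s}T_{j,u}$, defines $\log T_j = \log T_{j,s} + \log T_{j,u}$ by choosing $\beta_j \in (\alpha_j-1,\alpha_j]$ with $\lambda_j = e^{2\pi i\beta_j}$ on each eigenspace and using the nilpotent series for the unipotent part, sets $R_j = \tfrac{1}{2\pi i}\log T_j$, and then declares the generating sections of $\mathcal{V}_\alpha$ to be $\tilde s(z) = e^{-\sum z_j R_j}v$ on the universal cover $\mathbb{H}^n \to (\Delta^\ast)^n$, obtaining $\nabla s = \sum_k(-R_k)\,s\otimes d\log t_k$. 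This is precisely your twist $\tilde e = \exp(\sum_j\Gamma_j\log t_j)\,e$ with $\Gamma_j = -R_j$, and your residue computation agrees with the paper's formula~\eqref{deri}. The obstacle you flag---the simultaneous commutativity of the $\Gamma_j$---is real but resolvable exactly as you outline: the commuting $T_j$ share a joint generalized eigenspace decomposition, and on each piece the unipotent logarithms are polynomials in the commuting nilpotents, hence commute; the paper passes over this point silently.
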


%\begin{definition} 我们定义Deligne典则延拓为上面定理中的全纯向量丛 $\mathcal{V}_{I}$，这里 $I$ 为任意长度为1的区间。对每个实数 $\beta \in \mathbb{R}$，我们用符号 $\mathcal{V}^{\beta} := \mathcal{V}_{[\beta,\beta+1)}$ 和$\mathcal{V}^{>\beta} := \mathcal{V}_{(\beta,\beta+1]}$。
%\end{definition}

If moreover, $\mathcal{V}$ is a complex polarized variation of Hodge structure on $U$, it carries a Hodge filtration $F^{\bullet}$. Now we do not need the quasi-unipotent assumption because each eigenvalue has norm $1$, see \cite[Lemma $4.5$]{Sch73}. Noting Schmid's original paper assumes that we work with an integral variation of Hodge structure, for complex VHS, there are more recent papers like \cite{Den22} and \cite{SS22}. We consider the Deligne extension $\mathcal{V}_{\alpha}$, for $\alpha \in \mathbb{R}^r$, whose eigenvalues of the eigenvalues of the residue along $D_i$ lie inside the  $[-\alpha_i,-\alpha_i+1)$. By the following Schmid's nilpotent orbit theorem for arbitrary complex polarized Hodge structures, the graded pieces $\mathcal{V}^{p,q}$ also extend to vector bundle $\mathcal{V}^{p,q}_{\alpha}$ on $X$.

\begin{theorem}\cite{Den22}
		Let $X$ be a complex manifold and let $D = \sum_{i=1}^{r} D_{i}$ be a simple normal crossing divisor on $X$. Let $\mathcal{V}$ be a complex polarized variation of Hodge structures on $U = X \setminus D$. Then for every $\alpha \in \mathbb{R}^{r}$, $F^{p} \mathcal{V}_{\alpha}$ and $\mathcal{V}_{\alpha}^{p,q} = F^{p}\mathcal{V}_{\alpha}/ F^{p+1} \mathcal{V}_{\alpha}$ are locally free sheaves.
  %Moreover, $E_{\alpha}^{p,q}$ can be naturally identified to %$\mathbf{P}_{\alpha}E^{p,q}$, which is the prolongation via growth of the Hodge norm. 
	\end{theorem}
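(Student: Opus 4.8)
The plan is to prove this by localizing and then invoking the several-variable nilpotent orbit theorem in the form valid for complex polarized variations of Hodge structure. Since local freeness of a coherent sheaf is a local statement, and since the $\mathcal V^{p,q}_\alpha$ are by definition the graded quotients of the filtration $F^\bullet\mathcal V_\alpha$, it suffices to show that each $F^p\mathcal V_\alpha$ is a holomorphic subbundle of $\mathcal V_\alpha$ with locally free quotient; the local freeness of $\mathcal V^{p,q}_\alpha$ then follows formally. So first I would reduce to the local model $X=\Delta^n$, $U=(\Delta^\ast)^r\times\Delta^{n-r}$, with $D_i=\{t_i=0\}$, and set $F^p\mathcal V_\alpha:=j_\ast F^p\mathcal V\cap\mathcal V_\alpha$.

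The second step is to reduce to unipotent monodromy. As noted in the excerpt (and following \cite[Lemma $4.5$]{Sch73}), for a complex polarized VHS every monodromy eigenvalue has absolute value one. Writing each monodromy operator as $T_i=T_i^sT_i^u$, I would decompose the local system into the generalized eigenspaces of the semisimple parts $T_i^s$. On each eigenspace the semisimple part is absorbed into a multivalued twist $\prod_i t_i^{\beta_i}$, with $\beta_i\in\mathbb R$ determined by the eigenvalue and pinned down by the requirement that it lie in $[-\alpha_i,-\alpha_i+1)$; this twist is flat and compatible with $F^\bullet$, so it changes the Deligne lattice $\mathcal V_\alpha$ by a holomorphic isomorphism and leaves us in the case $T_i=\exp(N_i)$ with $N_i$ nilpotent.

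The heart of the argument is then the nilpotent orbit theorem. For $T_i=\exp(N_i)$ it produces a limiting Hodge filtration $F^\bullet_\infty$ on the reference fibre, and asserts that the period map is approximated, with explicit error estimates in the Hodge metric, by the nilpotent orbit $z\mapsto\exp\bigl(\sum_i z_iN_i\bigr)F^\bullet_\infty$. Transporting this statement into the Deligne frame shows that, for each $p$, the subsheaf $F^p\mathcal V_\alpha$ is locally generated by part of a holomorphic frame of $\mathcal V_\alpha$, with fibre at the origin equal to $F^p_\infty$. Hence $F^p\mathcal V_\alpha$ is a holomorphic subbundle of $\mathcal V_\alpha$ with locally free quotient, and the graded piece $\mathcal V^{p,q}_\alpha=F^p\mathcal V_\alpha/F^{p+1}\mathcal V_\alpha$ is locally free.

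The main obstacle is the nilpotent orbit theorem itself in the several-variable complex setting. In the rational or integral case this is classical (Schmid in one variable, Cattani--Kaplan--Schmid in several), but the underlying $\mathfrak{sl}_2$-orbit theorem and the Hodge-norm estimates were developed using the real structure of the Hodge decomposition; extending the splitting constructions and the commuting $\mathfrak{sl}_2$-triples to genuinely complex polarized Hodge structures is precisely what requires the analytic work of \cite{Den22} (and independently \cite{SS22}). These boundary growth estimates for the Hodge metric, which force the extended filtration to be a subbundle rather than merely a coherent subsheaf, are the technical core on which the whole argument rests.
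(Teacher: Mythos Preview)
The paper does not actually prove this theorem: it is stated with the attribution \cite{Den22} and used as a black box, so there is no ``paper's own proof'' to compare your proposal against. What you have written is a reasonable high-level outline of the strategy carried out in \cite{Den22} (and, in a closely related form, in \cite{SS22}): localize, reduce to unipotent monodromy via the eigenspace decomposition, and then invoke the several-variable nilpotent orbit theorem for complex polarized VHS to show that the extended Hodge filtration is a subbundle of the Deligne lattice. You also correctly identify the genuine difficulty, namely that the $\mathfrak{sl}_2$-orbit machinery and Hodge-norm estimates underlying the nilpotent orbit theorem were originally developed for real/rational VHS, and extending them to the complex polarized setting is exactly the content of \cite{Den22} and \cite{SS22}.

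One small comment: your reduction to unipotent monodromy via twisting by $\prod_i t_i^{\beta_i}$ is a bit glib. The generalized eigenspace decomposition for the semisimple parts $T_i^s$ need not be compatible with the Hodge filtration $F^\bullet$ pointwise over $U$; what one actually uses is that the decomposition is compatible with the \emph{limiting} mixed Hodge structure, and the twist is applied at the level of the Deligne lattice rather than to the VHS on $U$. This is handled correctly in the references, but as written your sentence ``this twist is flat and compatible with $F^\bullet$'' overstates what is true on $U$ itself. Beyond that, since the paper offers no proof of its own, there is nothing further to compare.
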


\noindent Note that the connection $\nabla \colon \mathcal{V} \to \Omega_{U}^{1} \otimes \mathcal{V}$ extends to a logarithmic connection
	$$ \nabla \colon \mathcal{V}_{\alpha} \to \Omega_{X}^{1} (\log D) \otimes\mathcal{V}_{\alpha}$$
	and satisfies the Griffiths transversality condition, i.e., on each graded piece $\mathcal{V}_{\alpha}^{p,q}$, the connection $\nabla$ induces an $\mathcal{O}_{X}$-linear map
	$$ \theta \colon \mathcal{V}_{\alpha}^{p,q} \to \Omega_{X}^{1} (\log D) \otimes \mathcal{V}_{\alpha}^{p-1,q+1}.$$

We denote the associated de Rham complex of $\mathcal{V}_{\alpha}$ by $\DR(\mathcal{V}_{\alpha}, \log D)$ as follows:
\begin{align}
\quad \big[\mathcal{V}_{\alpha} \rightarrow \mathcal{V}_{\alpha} \otimes\Omega^1_{X}(\log D) \rightarrow \cdots \rightarrow \mathcal{V}_{\alpha}\otimes\Omega^n_{X}(\log D) \big].
\end{align}
And denote its $p$-graded piece complex by $\gr_p\DR(\mathcal{V}_{\alpha}, \log D)$ as follows:
\begin{align}
\quad \big[\mathcal{V}^{p,q}_{\alpha} \rightarrow \mathcal{V}^{p-1,q+1}_{\alpha} \otimes\Omega^1_{X}(\log D) \rightarrow \cdots \rightarrow \mathcal{V}^{p-n,q+n}_{\alpha}\otimes\Omega^n_{X}(\log D) \big].
\end{align}

 %For a projective variety $X$, we denote by $\MHM(X)$ the abelian category of graded polarizable mixed Hodge modules on $X$. Roughly speaking, a mixed Hodge module is a filtered $\mathcal{D}$-module with extra structure. 
 %The famous Saito vanishing theorem concerns the graded piece of the de Rham complex of mixed Hodge modules.
	
%	\begin{theorem}
%Let $X$ be a complex projective variety and let $\mathcal{M} \in \MHM(X)$ be a graded polarizable mixed Hodge module on $X$. For every ample line bundle $L$ on $X$, we have
	%	$$ \mathbb{H}^{l}\big( X, \gr_p \DR(\mathcal{M}) \otimes \L \big) = 0 \qquad \text{for all} l > 0, p \in \mathbb{Z}.$$
	%\end{theorem}
We now review some necessary facts on Hodge modules. All of the results can be found in two original papers of M. Saito \cite{Sai88, Sai90}. The reader also can consult Christian. Schnell's many excellent papers, surveys, and notes on his website, for example, \cite{Sch19}.
On smooth complex variety $X$, Saito defines two categories $\HM(X, w)$ and $\MHM(X)$ which are the category of pure Hodge modules with weight $w$ and the category of graded polarizable mixed Hodge modules. The objects in these categories are certain holonomic filtered $\mathcal D$-modules with an extra structure that is required to satisfy suitable conditions. For a singular variety $X$, we embed $X$ in a smooth variety $Z$ and define $\MHM(X)$ as the category of mixed Hodge modules on $Z$ whose support is in $X$. The category $\MHM(X)$ does not depend on the choice of closed embedding $X \hookrightarrow Z$. We can similarly define $\HM(X, w)$ as well. 
The two important filtrations on a mixed Hodge module are the weight filtration $W_{\bullet}$ and the Hodge filtration $F_{\bullet}$, which generalize the weight filtration and the Hodge filtration in the theory of mixed Hodge structures.
Firstly, we have the associated de Rham complex of $\mathcal M$:
$$  \DR_{X}(\mathcal M) = \left [\mathcal M \to \Omega_{X}^{1} \otimes\mathcal M \to \cdots \to \Omega_{X}^{n} \otimes \mathcal M\right] [n].$$
The filtration on the de Rham complex of a mixed Hodge module $\mathcal M$ on a smooth variety $X$ is given by
		$$ F_{k} \DR_{X}(\mathcal M) = \left[F_{k}\mathcal M \to \Omega_{X}^{1} \otimes F_{k+1}\mathcal M \to \cdots \to \Omega_{X}^{n} \otimes F_{k+n}\mathcal M\right] [n].$$
		After taking the graded pieces, we obtain a complex of $\shO_{X}$-modules
		$$ \gr_{k}\DR_{X}(\mathcal M) = \left[ \gr_{k} \mathcal M \to \Omega_{X}^{1} \otimes \gr_{k+1} \mathcal M \to \cdots \Omega_{X}^{n} \otimes \gr_{k+n} \mathcal M\right][n]. $$
		We can extend the notion of graded de Rham complexes to objects in the derived category of mixed Hodge modules. 
%
%
%
%
%
%For a singular variety $X$, we define the graded de Rham complex of $\cM \in \MHM(X)$ by embedding $X$ in a smooth variety $Z$ and taking the graded de Rham complex in $Z$. It turns out that the complex $\gr_{k}\DR_{X}(\cM)$ is a complex of $\cO_{X}$-modules, and does not depend on the choice of the embedding $X \hookrightarrow Z$ as an object in $\cD_{\coh}^{b}(\cO_{X})$.
%
%
%
%
 If $j: U \to X$ is a locally closed immersion between smooth varieties and $\mathcal M \in \MHM(U)$, then the transformation $j_{!}\mathcal M \to j_{\ast} \mathcal M$ induces a map $\mathcal H^{0}(j_{!} \mathcal M) \to \mathcal H^{0}(j_{\ast} \mathcal M)$. The image of this map is the intermediate extension of $\mathcal M$ which is denoted by $j_{!\ast} \mathcal M$. Furthermore, let's suppose that $U \subset X$ is open and $D = X \setminus U$ is a divisor. Then $j_{\ast}$ and $j_{!}$ are induced by an exact functor at the level of abelian categories. In other words, if $\mathcal M \in \MHM(U)$ is a mixed Hodge module on $U$, then we can view $j_{\ast} \mathcal M$ and $j_{!} \mathcal M$ as objects inside $\MHM(X)$. Moreover, the underlying $\mathcal O_{X}$-module structure of $j_{\ast} \mathcal M$ agrees with the usual sheaf theoretic push-forward.
Lastly, we consider proper morphisms $\pi: Y \to X$, taking the graded de Rham complex commutes with taking the push forward. In other words, if $\mathcal M$ is a mixed Hodge module on $Y$, we have
	 $\gr_{p}\DR_{X} (\pi_{\ast} \mathcal M) \simeq \mathbf{R} \pi_{\ast}( \gr_{p}\DR_{Y} \mathcal M).$

\section{The proof of the main theorem}
Now we begin to explain Theorem \ref{main2}. In the first subsection, we assume $D$ is a simple normal crossing divisor.
\subsection{the quasi-isomorphism of complexes}
In this subsection, we will explain the following theorem.
\begin{theorem} \label{snc-main}
Let $D$ be a $k$-ample simple normal crossing divisor on a smooth projective variety $X$. Let $\mathcal V$ be the complex polarized variation of Hodge structure on $U=X\setminus D$ with filtration $F^{\bullet}$. And we assume all $\alpha_i\geq 0$, we have
\begin{equation}
    \mathbb{H}^{p+q} (X, \rm gr^F_{-q} \rm DR(\mathcal{V}_{\alpha}, \log D)) =0 
\end{equation}
 for all $p+q > n+k$.   
\end{theorem}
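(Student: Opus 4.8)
The plan is to reduce the asserted vanishing for the graded (Higgs) complex to the vanishing of the \emph{full} logarithmic de Rham complex by means of the degeneration at $E_1$ of the Hodge-to-de Rham spectral sequence, and to obtain the latter by localizing along $D$ and invoking Lemma~\ref{mere}.

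First I would localize. Since $\Omega^s_X(\log D)\otimes\mathcal O_X(\ast D)=\Omega^s_X(\ast D)$ and $\mathcal V_\alpha\otimes\mathcal O_X(\ast D)=\mathcal V(\ast D)$ is independent of $\alpha$, tensoring $\DR(\mathcal V_\alpha,\log D)$ by $\mathcal O_X(\ast D)$ yields the meromorphic de Rham complex $\DR(\mathcal V(\ast D))$, whose term in degree $s$ is $\mathcal F_s\otimes\mathcal O_X(\ast D)$ with $\mathcal F_s:=\Omega^s_X(\log D)\otimes\mathcal V_\alpha$ a vector bundle (locally free by Theorem~\ref{Deligne} together with the nilpotent orbit theorem \cite{Den22}). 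By Lemma~\ref{mere} we get $\h^t\big(X,\mathcal F_s\otimes\mathcal O_X(\ast D)\big)=0$ for $t>k$, and feeding this into the hypercohomology spectral sequence of a complex concentrated in degrees $0,\dots,n$ gives
\[
\mathbb H^m\big(X,\DR(\mathcal V(\ast D))\big)=0\qquad\text{for } m>n+k.
\]

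Next I would transfer this to the logarithmic complex. As all $\alpha_i\ge 0$, the eigenvalues of each residue $\mathrm{Res}_{D_i}\nabla$ lie in $[-\alpha_i,-\alpha_i+1)\subseteq(-\infty,1)$, so none of them is a positive integer; Deligne's comparison theorem then shows that the inclusion $\DR(\mathcal V_\alpha,\log D)\hookrightarrow\DR(\mathcal V(\ast D))$ is a quasi-isomorphism, whence $\mathbb H^m\big(X,\DR(\mathcal V_\alpha,\log D)\big)=0$ for $m>n+k$. Finally I would descend to graded pieces through the Hodge filtration $F^\bullet$: the spectral sequence
\[
E_1^{s,\,m-s}=\mathbb H^{m}\big(X,\gr^F_{s}\DR(\mathcal V_\alpha,\log D)\big)\ \Longrightarrow\ \mathbb H^{m}\big(X,\DR(\mathcal V_\alpha,\log D)\big)
\]
degenerates at $E_1$, so each $E_1$-term equals the corresponding $E_\infty$-term, a subquotient of the abutment, and therefore vanishes for $m>n+k$. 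Setting $s=-q$ and $m=p+q$ recovers exactly $\mathbb H^{p+q}\big(X,\gr^F_{-q}\DR(\mathcal V_\alpha,\log D)\big)=0$ for $p+q>n+k$.

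The main obstacle is the last ingredient, the $E_1$-degeneration of the Hodge-to-de Rham spectral sequence for the logarithmic de Rham complex of a polarized variation of Hodge structure on the projective variety $X$. For an integral variation this is classical (Deligne, Zucker), and in general it is part of Saito's theory; for an arbitrary \emph{complex} polarized variation one must appeal to the more recent analytic input of \cite{Den22, SS22}, which is precisely why the local freeness of the pieces $\mathcal V^{p,q}_\alpha$ and the strictness of $F^\bullet$ were recorded above. A secondary point to check carefully is that Deligne's comparison, and hence the identification of the abutment, genuinely holds for every $\alpha_i\ge 0$ rather than only for $\alpha_i\in[0,1)$; this is exactly where the sign hypothesis on the $\alpha_i$ enters.
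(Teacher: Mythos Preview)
Your proposal is correct and follows essentially the same three-step architecture as the paper: (i) use Lemma~\ref{mere} together with the hypercohomology spectral sequence to kill $\mathbb H^{m}(X,\DR(\mathcal V_\alpha,\ast D))$ for $m>n+k$; (ii) transfer this to $\DR(\mathcal V_\alpha,\log D)$ via the log--meromorphic quasi-isomorphism valid when all $\alpha_i\ge 0$; (iii) conclude via the $E_1$-degeneration of the Hodge filtration spectral sequence. The only difference is in step~(ii): the paper proves the quasi-isomorphism by an explicit local computation (Lemma~\ref{exact} and Lemma~\ref{quai}), writing sections in the frame of the Deligne extension and showing the quotient complex is acyclic, whereas you invoke Deligne's comparison theorem as a black box under the residue condition ``no positive integer eigenvalue''; both are valid and amount to the same thing.
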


Now we will recall the classical Deligne's extension, the material is standard.
Let $E$ be the holomorphic vector bundle on the open sub-manifold $U$ of $X$, and it has a flat connection $\nabla$. Moreover, the boundary $D=X-U$ is a simple normal crossing divisor and the connection $\nabla$ is quasi-unipotent along the divisor $D$. 

Under this assumption, by Theorem \ref{Deligne}, there is a canonical extension $E_{\alpha}$ of $E$ on $X$ which the eigenvalue of residue operator included in $[-\alpha, -\alpha+1)$, and the extension connection $\nabla$ has at most logarithmic poles,
$$
\nabla : E_{\alpha} \rightarrow E_{\alpha} \otimes \Omega^1_{X}(\log D).
$$
Moreover, the residue of the connection $\nabla$ along the divisor $D$ is quasi-unipotent. Note that the unipotent means all eigenvalues are $0$.
We now consider some complexes of quasi-coherent sheaves, the first is
\begin{align}
\DR(E): \quad E \rightarrow E\otimes\Omega_U^1 \rightarrow E\otimes\Omega_U^2 \rightarrow \cdots \rightarrow E\otimes\Omega_U^n,
\end{align}
this is the complex vector bundle on $U$. By the inclusion $i: U \hookrightarrow X$, we can push forward complex $\DR(E)$ to $X$, and obtain a complex of quasi-coherent sheaves $\Xi^{\bullet}_{\alpha}$ on $U$. We know its $p$-th term is
$$\Xi^p = i_{\ast}(E\otimes\Omega_U^p)$$
for any $p \geq 0$, but the section of $\Xi^p$ may have the essential singularities along $D$, so we consider the sub-complexes of $\Xi^{\bullet}$ with mild singularities. Then we set
\begin{align}
\Xi^p_{\alpha}(*D) = E_{\alpha}\otimes\Omega_U^p(*D)
\end{align}
with terms only having poles along the divisor $D$.
Moreover, we set
\begin{align}
\Xi^p_{\alpha}(\log D)=  E_{\alpha}\otimes\Omega_U^p(\log D)
\end{align}
with terms only having logarithmic poles along the divisor $D$.

So now we have three complexes on $X$, i.e., $\Xi^{\bullet}, \Xi^{\bullet}_{\alpha}(*D), \Xi^{\bullet}_{\alpha}(\log D)$, the third is complex due to Deligne's extension theorem. All differentials in these complexes are induced from the connection $\nabla$. We now study the inclusion morphism between $\Xi^{\bullet}_{\alpha}(*D), \Xi^{\bullet}_{\alpha}(\log D)$.

For every point on $X$, we can find a small neighborhood which is isomorphism to $\Delta^n$, denote the local coordinate by $t_1,\cdots,t_n$. since $D$ is simple normal crossing, locally $D\cap \Delta$ be the zero locus of $t_1\cdots t_r=0$, we have
$$
U\cap \Delta^n = (\Delta^{\ast})^r\times \Delta^{n-r}.
$$
To simplify the calculation, we again assume $r=n$, the general case is similar.
On $\Delta^n$, the canonical extension $ E_{\alpha}$ can be generated by some sections.
Let $V$ be the generic fiber of vector bundle $E$, it is a finite-dimensional vector space. The local system has the monodromy $T_1,\cdots, T_n$, each $T_j$ represented by an anti-clockwise loop around the hyperplane $t_j=0$. According to the assumption before, each $T_j$ is the quasi-unipotent operator of $V$. For $\mathbf{\lambda} = (\lambda_1, \cdots, \lambda_n)$, each $\lambda_i$ is the eigenvalue of $T_i$.

% Therefore we can define its logarithm:
%$$
%N_j = -\log T_j = \sum_{m=1}^{\infty} \frac{1}{m}(\Id-T_j)^m.
%$$

Let $V$ be a vector space and let $T$ be a quasi-unipotent operator on $V$. Consider the decomposition
$$
T = T_s \cdot T_u
$$
where $T_s$ is semisimple and $T_u$ is unipotent. We know the smallest integer $k$ exists such that $T_s^k = \Id$.
Since $T_s$ is semisimple over $\mathbb{C}$, there is a decomposition
$$
V = \oplus V_{\alpha} ~~\text{with}~~  T_s|_{V_{\alpha}} = \lambda_{\alpha}\cdot \Id|_{V_{\alpha}},
$$
where $\lambda_{\alpha}$ are the eigenvalues of $T_s$. Since $\lambda_{\alpha}^k =1$, we can choose $\beta_{\alpha} \in (\alpha-1, \alpha]$ satisfying 
$$
\lambda_{\alpha} = e^{2\pi i \beta_{\alpha}}.
$$
We define $\log T_s$ by its action on eigenspaces by:
$$
\log T_s|_{V_{\alpha}} := 2\pi i \beta \Id|_{V_{\alpha}}.
$$
Since $T_u$ is unipotent, we can define its logarithmic as a convergent series
$$
\log T_u = - \sum_{m\geq 1} \frac{1}{m}(\Id - T_u)^m.
$$
Then we define the logarithmic of $T$ to be
$$
\log T := \log T_s + \log T_u, 
$$
where $\log T_u$ is nilpotent and $\log T_s$ is semisimple.
On the universal covering $\mathbb{H}^n$ of $\Delta^n$,
$$
\mathbb{H}^n \rightarrow (\Delta^{\ast})^n, \quad (z_1,\cdots,z_n) \rightarrow (e^{z_1},\cdots,e^{z_n}),
$$
We denote by $V^{\nabla}$ the space of multivalued flat sections of $(V, \nabla)$, which is a finite-dimensional $\mathbb{C}$ vector space.
Set $T_j$ to be the monodromy transformation with respect to $\gamma_j$, which pairwise commute and is an endomorphism of $V^{\nabla}$, that is, for any multivalued section $v(t_1, \cdots, t_n)\in V^{\nabla}$, one has 
$$
v(t_1, \cdots, t_j+1, \cdots, t_n) = (T_j v)(t_1, \cdots, t_n)
$$
and $[T_i, T_j]=0$ for any $i,j=1, \cdots, n$, set $R_j:= \frac{1}{2\pi i}\log T_j$.
Every element $v\in V^{\nabla}$ defines a holomorphic map: $\tilde{s}: \mathbb{H}^n \rightarrow V$ as follows,
$$
\tilde{s}(z) = e^{\sum (-z_j) R_j}v.
$$
Each $\tilde{s}$ descends to a holomorphic section $s$ of $E$ on $(\Delta^{\ast})^n$, and these sections generate the vector bundle $E_{\alpha}$ on $\Delta^n$. We can calculate the connection of these sections. Indeed, we have
$$
\nabla \tilde{s} = \sum_{k=1}^n (-R_k) e^{\sum (-z_j) R_j}v \otimes dz_k,
$$
and since $ dz_k= d \log t_k$, we obtain
\begin{equation} \label{deri}
\nabla s = \sum_{k=1}^n (-R_k) s \otimes d \log t_k =\sum_{k=1}^n (-\beta_k\cdot \Id - N_k) s \otimes d \log t_k.
\end{equation}
It is worth noting that $N_k s$ is the section corresponding to the vector $N_k v$.

For any section $\sigma$ of quasi-coherent sheaf $\Xi^p=i_{\ast}(\mathcal{V}\otimes\Omega_U^p)$, we write it as:
$$
\sigma = \sum_{I,\gamma} \sigma_I(\gamma) t^{\gamma}\otimes (d \log t)_I
$$
here $\sigma_I(\gamma)$ is the section of Deligne's canonical extension. Also $\gamma = (\gamma_1,\cdots,\gamma_n)\in \mathbb Z^n$ and $I$ over all subset of $\{1,2,\cdots,n\}$ of size $|I|=p$. We also use the convenient notation
$$
t^{\gamma} = \prod_{i=1}^n t_i^{\gamma_i}
$$
and
$$
(d \log t)_I = \prod_{i\in I}d \log t_i = \prod_{i\in I}\frac{dt_i}{t_i}.
$$
It is obvious that if $\sigma_I(\gamma)=0$ when $|\gamma|\ll 0$, then $\sigma$ is the section of $\Xi^{p}(*D)$.
If one have $\sigma_I(\gamma)=0$ when one of component $\gamma_i < 0$, then $\sigma$ is the section of $\Xi^{p}(\log D)$. From the formula \eqref{deri}, we can find that
$$
\nabla \sigma = \sum_{I,\gamma,k}(\gamma_k - \beta_k  - N_k) \sigma_I(\gamma)\cdot t^{\gamma} \otimes (d \log t_k \wedge (d \log t)_I).
$$
thus we write it as
$$
\nabla \sigma = \sum_{J,\alpha}\tau_J(\gamma) t^{\gamma}\otimes (d \log t)_J,
$$
Here $J$ is the set of all $(p+1)$-subset of $\{1,2,\cdots,n\}$, the coefficient is given by
\begin{equation}\label{diff}
\tau_J(\gamma) = \sum_{k\in J}(\gamma_k - \beta_k  - N_k) \sigma_{J\setminus \{k\}}(\gamma) (-1)^{\text{pos}(k,J)}.
\end{equation}
the notation $\text{pos}(k, J)$ means the position of $k$ in $J$, one advantage of the above formula is that the index $\gamma$ does not change under the derivative, therefore we can deal with each component one time.
It is also worth noting that $N_k$ is nilpotent, so the operator
$$
B_k = \gamma_k - \beta_k  - N_k
$$
is invertible if and only if $\gamma_k -\beta_k  \neq 0$.

To calculate the cohomology of these complexes, we consider the general abstract complex:
$$
M^{\bullet}: \quad M^0 \stackrel{\nabla}{\longrightarrow} M^1 \stackrel{\nabla}{\longrightarrow} M^2 \stackrel{\nabla}{\longrightarrow} \cdots \stackrel{\nabla}{\longrightarrow} M^n.
$$
We assume the element in $M^p$ is given by:
$$
\sigma = (\sigma_I)_{|I|=p},
$$
the index $I$ is the subset of $\{1,2,\cdots,n\}$ with size $p$. The differential operator $\nabla$ defined by:
$$
\nabla \sigma = (\tau_J)_{|J|=p+1},
$$
here
$$
\tau_J = \sum_{k\in J}(-1)^{\text{pos}(k,J)}B_k \sigma_{J\setminus \{k\}}.
$$
In this setting, $B_k$ can be any operator. the following lemma gives the condition that the complex $M^{\bullet}$ to be exact.

\begin{lemma}\label{exact}
If at least one $B_k$ is invertible, then the complex $(M^{\bullet}, \nabla)$ is exact.
\end{lemma}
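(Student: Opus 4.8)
The plan is to recognize the abstract complex $(M^{\bullet}, \nabla)$ as a Koszul-type complex built from the operators $B_1, \dots, B_n$ acting on the common module $W$ in which all the components $\sigma_I$ take values, and then to exhibit an explicit contracting homotopy whenever one of the operators is invertible. Writing $M^p = \bigoplus_{|I| = p} W$, the differential $\nabla$ is precisely the Koszul differential attached to the tuple $(B_1,\dots,B_n)$. I would first record that the very fact that $(M^{\bullet},\nabla)$ is a complex, i.e. $\nabla^2 = 0$, forces the $B_k$ to commute pairwise; this holds in our situation because $B_k = \gamma_k - \beta_k - N_k$ with $\beta_k$ scalar and the $N_k$ pairwise commuting nilpotents, so I am free to use $B_j B_k = B_k B_j$ throughout.

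Assuming $B_m$ is invertible, I would define a homotopy operator $s \colon M^p \to M^{p-1}$ by setting $(s\sigma)_I = 0$ whenever $m \in I$, and
$$(s\sigma)_I = (-1)^{\text{pos}(m, I \cup \{m\})}\, B_m^{-1}\, \sigma_{I \cup \{m\}} \qquad \text{whenever } m \notin I.$$
The goal is then to verify the homotopy identity $\nabla s + s \nabla = \Id$ on each $M^p$, which I would check componentwise on a size-$p$ index set $J$, distinguishing $m \in J$ from $m \notin J$. When $m \in J$, the term $k = m$ in $\nabla s$ contributes $B_m B_m^{-1}\sigma_J = \sigma_J$ while every other term vanishes since $s$ annihilates components indexed by sets containing $m$; when $m \notin J$, the single surviving term of $s\nabla$ again produces $\sigma_J$ together with a family of cross terms, and these cancel against the cross terms coming from $\nabla s$ after commuting $B_k$ past $B_m^{-1}$.

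Granting the identity, exactness follows at once: if $\sigma \in M^p$ is a cocycle, $\nabla\sigma = 0$, then $\sigma = \nabla(s\sigma) + s(\nabla\sigma) = \nabla(s\sigma)$, so every cocycle is a coboundary, which is the assertion of the lemma.

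I expect the main obstacle to be the sign bookkeeping in the homotopy identity: one must track how $\text{pos}(k, \cdot)$ changes when the distinguished index $m$ is inserted into or deleted from a subset, and confirm that for each fixed $l \in J$ the two signs appearing in the cross terms of $\nabla s\,\sigma$ and $s\,\nabla\sigma$ are exactly opposite. This is the standard, purely combinatorial Koszul sign computation. A conceptually cleaner route that avoids grinding through the signs is to identify $(M^{\bullet},\nabla)$ with the mapping cone of multiplication by $B_m$ on the Koszul complex of the remaining operators $\{B_k : k \neq m\}$; since $B_m$ commutes with the others and is invertible, this is the cone of an isomorphism of complexes, hence contractible and in particular exact, and I would present this as the structural explanation behind the explicit homotopy $s$.
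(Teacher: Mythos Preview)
Your proposal is correct and follows essentially the same approach as the paper: both construct an explicit contracting homotopy $s\colon M^p\to M^{p-1}$ sending $\sigma$ to $B_m^{-1}\sigma_{J\cup\{m\}}$ on components with $m\notin J$ and zero otherwise, then verify $\nabla s + s\nabla$ is (a scalar multiple of) the identity. Your version is in fact slightly more careful than the paper's---you include the correct sign $(-1)^{\text{pos}(m,I\cup\{m\})}$ so that the identity comes out as $+\Id$ rather than $-\Id$, and you make explicit the pairwise commutation of the $B_k$ (needed both for $\nabla^2=0$ and for the cross-term cancellation), which the paper's phrase ``$B_k$ can be any operator'' glosses over; your mapping-cone remark is a pleasant structural explanation but not a different proof.
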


\begin{proof}
After rearranging the index, we may assume that $B_1$ is invertible, we shall prove that this complex is contractible. A contract morphism $\epsilon: M^p \rightarrow M^{p-1}$ can be defined as follow:
$$
\epsilon(\sigma) = \big([1\not\in J] \cdot B_1^{-1}\sigma_{J\cup\{1\}})_{|J|=p-1}.
$$
the notation above $[P]=1$ when the statement $P$ is true, otherwise $[P]=0$. It is easy to calculate that $\nabla \epsilon + \epsilon \nabla = \Id$, this infers the desired result.
\end{proof}
In the complex $(\Xi^{\bullet}, \nabla)$ we discussed, the operator $B_k$ in \eqref{diff} is invertible when $\gamma_k- \beta_k \neq 0$. As the application of the above Lemma \ref{exact}, we know it is exact when $\gamma_k - \beta_k \neq 0$. 
In the three complexes on $X$, i.e., $\Xi^{\bullet}, \Xi^{\bullet}_{\alpha}(*D), \Xi^{\bullet}_{\alpha}(\log D)$ we discussed. As the application of the above Lemma \ref{exact}, the operator $B_k$ in \ref{diff} is invertible when $\gamma_k- \beta_k \neq 0$. If $\beta_k > -1$, then for all $\gamma_k \leq -1$, we have $\gamma_k- \beta_k \neq 0$. By our setup, $\beta_k \in (\alpha_k -1, \alpha_k]$, this is to say, we can let all $\alpha_k \geq 0$. In this assumption, we only need to consider the cases when $\gamma_k \geq 0$, these terms belong in all three complexes. So we conclude that the inclusion of complex between $\Xi^{\bullet}_{\alpha}(\log D) \hookrightarrow  \Xi^{\bullet}_{\alpha}(*D) \hookrightarrow \Xi^{\bullet}$ is quasi-isomorphism when all $\alpha_k\geq 0$. Now we come to the next lemma.

\begin{lemma} \label{quai}
    Two complexes $\Xi^{\bullet}_{\alpha}(\log D), \Xi^{\bullet}_{\alpha}(*D)$ are quai-isomorphism under the inclusion morphism, provided the $\alpha = (\alpha_1, \cdots, \alpha_r)$ satisfies all $\alpha_i \geq 0$.
\end{lemma}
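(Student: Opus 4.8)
The plan is to reduce the claimed quasi-isomorphism to the purely algebraic exactness criterion of Lemma \ref{exact}, exploiting the fact, already visible in formula \eqref{diff}, that the differential $\nabla$ preserves the multi-index $\gamma \in \mathbb{Z}^n$. Since being a quasi-isomorphism of complexes of sheaves is a local property, I would work on a fixed polydisk $\Delta^n$ with $D=\{t_1\cdots t_n=0\}$ and write every section in the normal form $\sigma=\sum_{I,\gamma}\sigma_I(\gamma)\,t^{\gamma}\otimes(d\log t)_I$. Because the coefficient $\tau_J(\gamma)$ in \eqref{diff} depends only on the $\sigma_{\bullet}(\gamma)$ carrying the \emph{same} $\gamma$, both $\Xi^{\bullet}_{\alpha}(\log D)$ and $\Xi^{\bullet}_{\alpha}(*D)$ decompose along the grading by $\gamma$ into finite-dimensional subcomplexes $M^{\bullet}_{\gamma}$, whose differential is exactly the abstract one of Lemma \ref{exact} with $B_k=\gamma_k-\beta_k-N_k$ acting on the generic fibre $V$.

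First I would match the graded pieces to each complex. By the description preceding the lemma, a section lies in $\Xi^{\bullet}_{\alpha}(\log D)$ precisely when $\sigma_I(\gamma)=0$ whenever some $\gamma_i<0$, whereas it lies in $\Xi^{\bullet}_{\alpha}(*D)$ when $\sigma_I(\gamma)=0$ only for $\gamma$ not bounded below. Thus, in the $\gamma$-grading, the inclusion $\Xi^{\bullet}_{\alpha}(\log D)\hookrightarrow\Xi^{\bullet}_{\alpha}(*D)$ is the inclusion of the summands indexed by $\gamma\in\mathbb{Z}_{\geq 0}^n$ into the summands indexed by $\gamma$ bounded below. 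Its cokernel complex $Q^{\bullet}$ is then the sum of the $M^{\bullet}_{\gamma}$ over those $\gamma$ having at least one negative component, and I would aim to show $Q^{\bullet}$ is acyclic.

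The key step is the exactness of each such $M^{\bullet}_{\gamma}$. Fix $\gamma$ with $\gamma_k<0$ for some $k$. Since $\alpha_k\geq 0$ forces $\beta_k\in(\alpha_k-1,\alpha_k]\subset(-1,\infty)$, we get $\gamma_k-\beta_k<\gamma_k+1\leq 0$, so $\gamma_k-\beta_k\neq 0$; as $N_k$ is nilpotent, $B_k=(\gamma_k-\beta_k)\,\Id-N_k$ is invertible. Lemma \ref{exact} then gives exactness of $M^{\bullet}_{\gamma}$, hence of $Q^{\bullet}$. Feeding this into the long exact cohomology sequence of $0\to\Xi^{\bullet}_{\alpha}(\log D)\to\Xi^{\bullet}_{\alpha}(*D)\to Q^{\bullet}\to 0$ yields the quasi-isomorphism.

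The main obstacle I anticipate is purely the bookkeeping of the $\gamma$-decomposition rather than any deep point: one must confirm that $\nabla$ genuinely respects the grading (guaranteed by \eqref{diff}), and that the passage to cohomology commutes with the sum over $\gamma$, so that acyclicity of every individual $M^{\bullet}_{\gamma}$ with a negative component indeed forces $Q^{\bullet}$ to be acyclic. Here the grading-preservation of $\nabla$ is what saves the day, since a cocycle solved $\gamma$-by-$\gamma$ has the same $\gamma$-support as its antecedent, so the reconstructed preimage remains a legitimate (bounded-below) meromorphic section. Everything else is a direct transcription of the discussion preceding the lemma into the language of Lemma \ref{exact}.
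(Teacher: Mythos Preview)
Your proposal is correct and follows essentially the same route as the paper: both arguments use the $\gamma$-grading (afforded by formula \eqref{diff}) to reduce to the abstract complex of Lemma \ref{exact}, observe that $\alpha_k\geq 0$ forces $\beta_k>-1$ so that $B_k=\gamma_k-\beta_k-N_k$ is invertible whenever $\gamma_k<0$, and conclude that the inclusion is a quasi-isomorphism because the graded pieces outside $\mathbb{Z}_{\geq 0}^n$ are exact. Your framing via the cokernel $Q^{\bullet}$ and the long exact sequence is slightly more explicit than the paper's discussion preceding the lemma, but the content is identical.
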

	
If $\mathcal{V}$ is a variation of Hodge structure on $U$, it is a flat vector bundle that carries a Hodge filtration $F^{\bullet}$. We also consider the so-called Deligne extension $\mathcal{V}_{\alpha}$, for $\alpha \in \mathbb{R}^r$, whose eigenvalues of the residue along $D$ lie inside the $I_{\alpha}$. It de Rham complex $\DR(\mathcal{V}_{\alpha}, \log D)$ is:
\begin{align}
 \quad \big[\mathcal{V}_{\alpha} \rightarrow \mathcal{V}_{\alpha} \otimes \Omega^1_X(\log D) \rightarrow \cdots \rightarrow \mathcal{V}_{\alpha}\otimes\Omega^n_X(\log D) \big].
\end{align}	
By Schmid's nilpotent orbit theorem for arbitrary complex polarized Hodge structures, the graded pieces $\mathcal{V}^{p,q}$ also extend to vector bundle $\mathcal{V}^{p,q}_{\alpha}$ on $X$. We have the graded piece of the logarithmic de Rham complex $\gr^p \DR(\mathcal{V}_{\alpha}, \log D)$ as follows:
\begin{align}
 \quad \big[\mathcal{V}^{p,q}_{\alpha} \rightarrow \mathcal{V}^{p-1,q+1}_{\alpha} \otimes\Omega^1_{X}(\log D) \rightarrow \cdots \rightarrow \mathcal{V}^{p-n,q+n}_{\alpha}\otimes\Omega^n_{X}(\log D) \big].
\end{align}	
For the meromorphic forms, we also have the associated de Rham complex:
\begin{align}
\DR(\mathcal{V}_{\alpha}, \ast D): \quad \big[\mathcal{V}_{\alpha}\otimes \Omega^0_X(\ast D) \rightarrow \mathcal{V}_{\alpha} \otimes \Omega^1_X(\ast D) \rightarrow \cdots \rightarrow \mathcal{V}_{\alpha}\otimes\Omega^n_X(\ast D) \big].
\end{align}	

We now consider the complex induced by the above complex:
\begin{align} \label{complex}
H^q(X,\Omega^0_X(*D)\otimes \mathcal{V}_{\alpha}) \xrightarrow{\nabla} H^q(X,\Omega^1_X(*D)\otimes \mathcal{V}_{\alpha})\xrightarrow{\nabla}\cdots\xrightarrow{\nabla}
H^q(X,\Omega^n_X(*D)\otimes \mathcal{V}_{\alpha}).
\end{align}
The associated cohomology is denoted by:
 $$E_2^{p,q}=H^p_{\nabla}(H^q(X,\Omega^{\bullet}_X(*D)\otimes \mathcal{V}_{\alpha})).$$
 Due to the assumption of $k$ ampleness and Lemma \ref{mere}, we know
\begin{align}\label{$1.3$}
    E_2^{p,q}=0,\quad \text{for}\quad q>k.
\end{align}

\noindent Let $\mathbb{H}^{\bullet}(X, \Omega^{\bullet}(*D)\otimes \mathcal{V}_{\alpha})$ be the hypercohomology of complex $\DR(\mathcal{V}_{\alpha}, \ast D)$. According to \eqref{$1.3$}, we obtain
\begin{equation}
    \mathbb{H}^s(X, \Omega^{\bullet}(*D)\otimes \mathcal{V}_{\alpha})=0,\quad\text{when}\quad s> n+k.\label{1.4}
\end{equation}

\noindent On the other hand, by the quasi-isomorphism in Lemma \ref{quai}, when all $\alpha_i \geq 0$, we have,
\begin{equation*}
\mathbb{H}^s(X, \Omega^{\bullet}(*D)\otimes \mathcal{V}_{\alpha})\cong \mathbb{H}^s(X, \Omega^{\bullet}(\log D)\otimes \mathcal{V}_{\alpha}).
\end{equation*}
Hence combining (\ref{1.4}), we obtain
\begin{align}\label{1.41}
    \mathbb{H}^s(X, \Omega^{\bullet}(\log D)\otimes \mathcal{V}_{\alpha})=0,\quad\text{when}\quad s> n+k.
\end{align}
Finally, we know that Hodge filtration induces the spectral sequence:
\begin{align}
     E^{p,q}_1 = \mathbb H^{p+q}(X, \gr_{-q}\DR(\mathcal{V}_{\alpha}, \log D)) \Rightarrow \mathbb H^{p+q}(Y, \DR(\mathcal{V}_{\alpha}, \log D)).
\end{align}
Since $\mathcal{V}$ is a variation of Hodge structure, we know this spectral sequence degenerate at $ E_1$ page, and we conclude Theorem \ref{snc-main}.

\subsection{Saito type vanishing theorem}
In this section, we first recall Saito's ideal to build the relationship between the logarithmic de Rham complex and the usual de Rham complex. The author learns it in \cite{Kim23}.

Let $X$ be a smooth projective variety and let $D$ be an SNC divisor on $X$. Let $U$ be the complement $X \setminus D$ and denote the open inclusion as $j \colon U \to X$. We consider a variation of Hodge structures $\mathcal M$ on $U$. The main theorem involves the logarithmic de Rham complex of $\mathcal M$. We now describe the relationship between the logarithmic de Rham complex $\DR(\mathcal M_{\alpha}, \log D)$ and the de Rham complex of the $\mathcal D_{X}$-modules $j_\ast \mathcal M$. The results can be found in \cite{Sai90}. We define a subring of $\mathcal D_{X}$ by
	$$ V_{0}^{D} \mathcal D_{X} = \{ P \in \mathcal D_{X} : P \cdot (f)^{j} \subset (f)^{j} \text{ for all } j \}$$
	where $D$ is given by $f = 0$. If $(z_{1},\cdots, z_{n})$ is a system of coordinates such that $D$ is given by the equation $z_{1}\cdots z_{l} = 0$, then we can describe $V_{0}^{D} \mathcal D_{X}$ locally as
	$$V_{0}^{D}\mathcal D_{X} = \mathcal O_{X} \langle z_{1} \pa_{1},\cdots, z_{l} \pa_{l}, \pa_{l+1} , \cdots, \pa_{n} \rangle. $$
Giving a $V_{0}^{D} \mathcal D_{X}$-module structure on $\mathcal N$ is equivalent to giving a logarithmic connection
	$$ \nabla \colon \mathcal N \to \Omega_{X}^{1} (\log D) \otimes \mathcal N $$
	such that $\nabla^{2} = 0$. We can define the logarithmic de Rham complex of a $V_{0}^{D} \mathcal D_{X}$-module as follows:
	$$ \DR(\mathcal N, \log D) = \left[\mathcal N \to \Omega_{X}^{1} (\log D) \otimes \mathcal N \to \cdots \to \Omega_{X}^{n} (\log D) \otimes \mathcal N\right][n].$$
Here we shift $n$ to keep it the same as the usual notations. We have a filtration $F_{\bullet}$ on $V_{0}^{D} \mathcal D$ which is induced by the order filtration on $\mathcal D_{X}$. For a variation of Hodge structures $\mathcal M$ on $U$, the Deligne extension $\mathcal M_{\alpha}$ has a structure of filtered $V_{0}^{D}\mathcal D_{X}$-module by the logarithmic connection and the Hodge filtration which is constructed via the nilpotent orbit theorem. The extensions $j_\ast \mathcal M$ is given by
	$$  j_\ast \mathcal M = \mathcal D_{X} \otimes_{V_{0}^{D} \mathcal D_{X}} \mathcal M_{1} \simeq \mathcal D_{X}\cdot \mathcal M_{1}. $$
	%Here, we chose $\epsilon > 0$ to be small enough so that the eigenvalues of the residues of $\mathcal M_{1-\epsilon}$ lie inside the interval $(-1, 0]$. The inclusion $\mathcal M_{1-\epsilon} \to \mathcal M_{1}$ induces a morphism $\mathcal M[!D] \to \mathcal M[\ast D]$ and this agrees with the canonical one. The image of the natural map $ \mathcal M[!D] \to \mathcal M[\ast D]$ is exactly $\mathcal D_{X} \cdot \mathcal M_{1 -\epsilon}$, which is the intermediate extension $j_{!\ast} \mathcal M$. 
 The filtrations on $j_\ast \mathcal M$ are defined by the filtrations on $\mathcal D_{X}$ and $\mathcal M_{\alpha}$. To be precise, we have
	\begin{align*}
		& F_{p} j_\ast \mathcal M = \sum_{i} F_{p-i} \mathcal D_{X} \otimes F_{i} \mathcal M_{1}.
	\end{align*}
	Also, we can define filtered $D$-modules $\mathcal D_{X}\otimes_{V_{0}^{D} \mathcal D_{X}} \mathcal M_{\alpha}$ for each $\alpha \in \mathbb R^{r}$. The obvious map
	$$ \mathcal M_{\alpha} \to \mathcal D_{X} \otimes_{V_{0}^{D}\mathcal D_{X}} \mathcal M_{\alpha+ 1} $$
	obtained by the inclusion $\mathcal M_{\alpha} \hookrightarrow \mathcal M_{\alpha + 1}$ composed with $m \mapsto 1 \otimes m$ gives a morphism at the level of de Rham complexes.
	$$ \DR(\mathcal M_{\alpha}, \log D) \to \DR_{X} \left(\mathcal D_{X} \otimes_{V_{0}^{D}\mathcal D_{X}} \mathcal M_{\alpha+ 1}\right).$$
	The key point is that we can compute the de Rham complex on the right-hand side using the logarithmic de Rham complex.
	
	\begin{theorem} \cite{Sai90}
		For each $\alpha \in \mathbb R^{l}$, the morphism
		$$ \rm DR(\mathcal M_{\alpha}, \log D) \to \rm DR_{X}\left(\mathcal D_{X} \otimes_{V_{0}^{D}\mathcal D_{X}} \mathcal M_{\alpha + 1}  \right)$$
		is a filtered quasi-isomorphism.
	\end{theorem}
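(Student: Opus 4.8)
The plan is to reduce the statement to an explicit local model and then resolve the induced module by a Koszul/Spencer complex in which the logarithmic complex appears as the surviving stratum. Because the notion of a filtered quasi-isomorphism is local and can be tested on each graded piece $\gr^F$, I would first restrict to $X=\Delta^n$ with $D=\{z_1\cdots z_l=0\}$, so that $V_0^D\mathcal D_X=\shO_X\langle z_1\pa_1,\dots,z_l\pa_l,\pa_{l+1},\dots,\pa_n\rangle$. The structural input is that $\mathcal D_X$ is a free right $V_0^D\mathcal D_X$-module on the monomials $\pa_1^{a_1}\cdots\pa_l^{a_l}$ ($a\in\mathbb N^l$), which follows by repeatedly rewriting $\pa_i z_i=z_i\pa_i+1$ to move the operators $z_i\pa_i\in V_0^D\mathcal D_X$ to the right. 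Setting $\mathcal N:=\mathcal D_X\otimes_{V_0^D\mathcal D_X}\mathcal M_{\alpha+1}$, this gives $\mathcal N=\bigoplus_{a\in\mathbb N^l}\pa^a\otimes\mathcal M_{\alpha+1}$ as an $\shO_X$-module, with Hodge filtration $F_p\mathcal N=\sum_a\pa^a\otimes F_{p-|a|}\mathcal M_{\alpha+1}$.

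Conceptually, the de Rham functor is the derived tensor product $\shO_X\otimes^{L}(-)$, computed by the Spencer resolution; since $\mathcal D_X$ is flat over $V_0^D\mathcal D_X$, induction is exact and one obtains $\shO_X\otimes^{L}_{\mathcal D_X}\mathcal N\simeq\shO_X\otimes^{L}_{V_0^D\mathcal D_X}\mathcal M_{\alpha+1}$, the right-hand side being a logarithmic de Rham complex. This quickly recovers the underlying \emph{unfiltered} quasi-isomorphism (both sides compute $\mathbf R j_\ast\mathcal M$), but it does not by itself produce the filtered statement, nor does it explain why the source carries the lattice $\mathcal M_\alpha$ rather than $\mathcal M_{\alpha+1}$; that discrepancy is the actual content.

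To prove the filtered claim I would filter $\mathcal N$ by the $\pa$-degree $|a|$ and consider the resulting spectral sequence for $\DR_X(\mathcal N)$; its associated graded is a Koszul-type complex whose degree-raising differential is assembled from contraction against $dz_i/z_i$ together with the residue endomorphisms $z_i\pa_i$. Passing to $\gr^F$ (where a filtered map is a filtered quasi-isomorphism precisely when it is a quasi-isomorphism on every graded piece), the symbol calculus gives $\gr^F\mathcal D_X=\shO_X[\xi_1,\dots,\xi_n]$ and $\gr^F V_0^D\mathcal D_X=\shO_X[z_1\xi_1,\dots,z_l\xi_l,\xi_{l+1},\dots,\xi_n]$, so that $\gr^F\DR_X(\mathcal N)$ becomes a Koszul complex on the symbols $\xi_i\,dz_i$. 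The decisive point — and the step I expect to be the main obstacle — is the vanishing of all strata with $a\neq 0$: this is controlled by the invertibility of operators $z_i\pa_i-c$ on the Deligne extension along $D_i$, which holds exactly because the residue eigenvalues lie in the half-open interval prescribed by $\alpha$ and a nilpotent part $N_i$ does not affect invertibility. This is the precise analogue of the invertibility criterion of Lemma \ref{exact} already exploited in Lemma \ref{quai}, while the rescaling $\xi_i\leftrightarrow z_i\xi_i$ simultaneously accounts for the shift $\alpha\mapsto\alpha+1$ and for matching the Hodge filtrations. Identifying the surviving $a=0$ stratum with $\gr^F\DR(\mathcal M_\alpha,\log D)$ through Griffiths transversality (the $\shO_X$-linear Higgs field $\theta$) then yields the quasi-isomorphism on each graded piece, hence the filtered quasi-isomorphism.
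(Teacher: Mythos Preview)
The paper does not supply its own proof of this theorem; it is stated as a citation from \cite{Sai90} and used as a black box to derive Proposition~\ref{prop-log-comparison}. So there is nothing in the paper to compare your argument against line by line.

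That said, your sketch is a faithful outline of the standard argument (essentially Saito's). The ingredients you isolate are the right ones: localize to a polydisc, use the freeness of $\mathcal D_X$ over $V_0^D\mathcal D_X$ on the monomials $\partial^a$ with $a\in\mathbb N^l$, compute the de Rham complex of the induced module via a Koszul/Spencer model, and then check the filtered statement on each $\gr^F$. Your identification of the crux is also correct: the strata with $a\neq 0$ die because the operators of the form $z_i\partial_i+(\text{integer shift})$ act invertibly on the relevant Deligne lattice, which is exactly the eigenvalue constraint encoded by the half-open interval, and this is the same mechanism the paper exploits in Lemma~\ref{exact} and Lemma~\ref{quai}. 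The shift $\mathcal M_\alpha$ versus $\mathcal M_{\alpha+1}$ indeed arises because multiplying by $z_i$ (equivalently, replacing $dz_i$ by $z_i\,d\log z_i$) moves the residue interval by one unit.

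Two places where your write-up is thinner than a full proof: (i) you assert but do not verify that the map in the statement, after passing to $\gr^F$, actually identifies the $a=0$ stratum with $\gr^F\DR(\mathcal M_\alpha,\log D)$; this requires tracking the filtration carefully through $\partial^a\otimes F_{p-|a|}\mathcal M_{\alpha+1}$ and the inclusion $\mathcal M_\alpha\hookrightarrow\mathcal M_{\alpha+1}$, not just invoking Griffiths transversality; and (ii) the spectral sequence argument for the filtration by $|a|$ needs the observation that the filtration is finite on each $\gr^F_p$ so that degeneration of the auxiliary spectral sequence is automatic. Neither is a genuine gap in the strategy, but both would need to be written out for the argument to be complete.
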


 If we apply this theorem for $\alpha = 0$ and take the graded pieces, we get the following isomorphisms:
	
	\begin{proposition} \label{prop-log-comparison}
		With the above notation, we get an isomorphism between the graded pieces of the de Rham complexes:
		\begin{align*}
			& \rm gr_{p}\rm DR_X(\mathcal M_{0}, \log D) \simeq_{qis} \rm gr_{p} \rm DR_{X} (j_\ast \mathcal M).
		\end{align*}
	\end{proposition}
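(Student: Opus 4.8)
The plan is to deduce the proposition directly from the preceding filtered quasi-isomorphism theorem of Saito by specializing to $\alpha = 0$ and then unwinding the definition of a filtered quasi-isomorphism. First I would observe that $0 \in \mathbb{R}^{l}$ is an admissible value in that theorem, so it furnishes a filtered quasi-isomorphism
$$\DR(\mathcal M_{0}, \log D) \longrightarrow \DR_{X}\big(\mathcal D_{X} \otimes_{V_{0}^{D}\mathcal D_{X}} \mathcal M_{1}\big),$$
arising from the canonical inclusion $\mathcal M_{0} \hookrightarrow \mathcal M_{1}$ followed by $m \mapsto 1 \otimes m$ described above. Here both sides carry the filtrations recorded earlier: the logarithmic de Rham complex is filtered via the Hodge filtration on $\mathcal M_{0}$ coming from the nilpotent orbit theorem, while the target is filtered via the order filtration on $\mathcal D_{X}$ together with the Hodge filtration on $\mathcal M_{1}$.

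The second step is to identify the target of this map with the de Rham complex of $j_{\ast}\mathcal M$. By the explicit presentation recalled above, $j_{\ast}\mathcal M = \mathcal D_{X} \otimes_{V_{0}^{D}\mathcal D_{X}} \mathcal M_{1}$ as a filtered $\mathcal D_{X}$-module, with Hodge filtration $F_{p} j_{\ast}\mathcal M = \sum_{i} F_{p-i}\mathcal D_{X} \otimes F_{i}\mathcal M_{1}$. Substituting this identification, the filtered quasi-isomorphism becomes
$$\DR(\mathcal M_{0}, \log D) \longrightarrow \DR_{X}(j_{\ast}\mathcal M).$$

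The final step is purely formal: by definition, a morphism of filtered complexes is a filtered quasi-isomorphism precisely when the induced morphism on the associated graded complexes is a quasi-isomorphism in each filtration degree. Applying $\gr_{p}$ to both sides therefore yields
$$\gr_{p}\DR_{X}(\mathcal M_{0}, \log D) \simeq_{qis} \gr_{p}\DR_{X}(j_{\ast}\mathcal M)$$
for every $p$; the left-hand complex is exactly the $p$-graded logarithmic de Rham complex displayed earlier, so no further computation is required.

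The only point demanding genuine care --- and hence the main, if mild, obstacle --- is verifying that the two Hodge filtrations used to form $\gr_{p}$ on either side are precisely the ones for which Saito's theorem is a \emph{filtered} statement: on the logarithmic side the Hodge filtration of the Deligne extension $\mathcal M_{0}$, and on the $\mathcal D_{X}$-module side the filtration $F_{p} j_{\ast}\mathcal M = \sum_{i} F_{p-i}\mathcal D_{X} \otimes F_{i}\mathcal M_{1}$. These compatibilities are built into the constructions recalled above, so once they are matched the conclusion is immediate.
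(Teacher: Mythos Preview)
Your proposal is correct and follows exactly the paper's approach: the paper presents this proposition as an immediate consequence of Saito's filtered quasi-isomorphism theorem by specializing to $\alpha = 0$ and taking graded pieces, and you have simply spelled out those two steps (together with the identification $j_{\ast}\mathcal M = \mathcal D_{X} \otimes_{V_{0}^{D}\mathcal D_{X}} \mathcal M_{1}$). There is no additional content in the paper's argument beyond what you have written.
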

	
This allows us to compute the de Rham complexes of $j_\ast \mathcal M$ in terms of the logarithmic de Rham complexes, which are easier to control. Now we prove the main Theorem \ref{main2}.

%	\begin{rema}
%		Both \cite{saito1990mixed} and \cite{wu2017multi} assume the quasi-unipotent of the monodromy to apply the nilpotent orbit theorem. However, this is no longer an issue since we have the nilpotent orbit theorem for arbitrary complex polarized Hodge structures by \cite{sabbah2022degenerating} and \cite{deng2022nilpotent}.
%	\end{rema}
%
%

\begin{theorem} \label{prop-vanishing-singulardivisor}
		Let $X$ be a projective variety and $D$ an effective Cartier divisor on $X$. Suppose that $U = X \setminus D$ is smooth and consider the open inclusion $j \colon U \to X$. Let $\mathcal M$ be a variation of Hodge structures on $U$ that we identify with the corresponding pure Hodge module. For $j_{\ast} \mathcal M$, we have the following vanishing hold:
	\begin{align*}
		&\mathbb H^{p+q}(X, \rm gr_{-p}\rm DR_{X}(j_\ast \mathcal M)) = 0
		\end{align*}
		for all $p+q > k$.
	\end{theorem}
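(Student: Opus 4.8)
The plan is to reduce the general, possibly singular, situation to the simple normal crossing case of Theorem~\ref{snc-main} by passing to a log resolution and transporting the computation upstairs. First I would choose a log resolution $f \colon Y \to X$ of the pair $(X,D)$ that is an isomorphism over $U$, so that $E := (f^{\ast}D)_{\mathrm{red}}$ is an SNC divisor on the smooth projective $Y$, $V := f^{-1}(U)$ maps isomorphically to $U$, and $\dim Y = \dim X = n$. Writing $\tilde{\jmath} \colon V \hookrightarrow Y$ for the open inclusion and transporting $\mathcal M$ to a pure Hodge module on $V$ through the isomorphism $f|_V$, the relation $f \circ \tilde{\jmath} = j \circ (f|_V)$ together with the fact that $f|_V$ is an isomorphism gives $j_{\ast}\mathcal M \simeq f_{\ast}\tilde{\jmath}_{\ast}\mathcal M$ in the derived category of mixed Hodge modules on $X$.

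Next I would push the cohomology up to $Y$. Since taking the graded de Rham complex commutes with proper push-forward (as recalled in the preliminaries), we get $\gr_{-p}\DR_X(j_{\ast}\mathcal M) \simeq \mathbf R f_{\ast}\,\gr_{-p}\DR_Y(\tilde{\jmath}_{\ast}\mathcal M)$, hence $\mathbb H^{p+q}(X,\gr_{-p}\DR_X(j_{\ast}\mathcal M)) \simeq \mathbb H^{p+q}(Y,\gr_{-p}\DR_Y(\tilde{\jmath}_{\ast}\mathcal M))$. As $E$ is SNC, Proposition~\ref{prop-log-comparison} applies on $Y$ and identifies $\gr_{-p}\DR_Y(\tilde{\jmath}_{\ast}\mathcal M)$, up to quasi-isomorphism, with the graded logarithmic de Rham complex $\gr_{-p}\DR(\mathcal M_0,\log E)$ of the Deligne extension $\mathcal M_0$ (residue eigenvalues in $[0,1)$, so all $\alpha_i = 0 \ge 0$). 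Everything is thereby reduced to a logarithmic vanishing statement on the SNC pair $(Y,E)$.

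The main obstacle is that Theorem~\ref{snc-main} is stated for a $k$-ample SNC divisor, while $E = (f^{\ast}D)_{\mathrm{red}}$ need not be $k$-ample on $Y$. I would resolve this by observing that $k$-ampleness enters the proof of Theorem~\ref{snc-main} only through Lemma~\ref{mere}, that is, through the vanishing $H^q(Y,\mathcal F \otimes \mathcal O_Y(\ast E)) = 0$ for $q > k$ and all vector bundles $\mathcal F$; and this is exactly the conclusion of Remark~\ref{keyrem}, which produces it on $(Y,E)$ directly from the $k$-ampleness of $D$ downstairs, depending only on the support $E$ by \eqref{1.0}. Feeding this into the hypercohomology spectral sequence $E_2^{p,q} = H^p_\nabla H^q(Y,\Omega^{\bullet}_Y(\ast E)\otimes\mathcal M_0)$, whose $E_2^{p,q}$ vanish for $q > k$, gives $\mathbb H^{s}(Y, \Omega^{\bullet}_Y(\ast E)\otimes\mathcal M_0) = 0$ for $s > n+k$; the $\log$-versus-$\ast$ quasi-isomorphism of Lemma~\ref{quai} (valid since all $\alpha_i = 0$) and the $E_1$-degeneration of the Hodge filtration spectral sequence then yield, exactly as in Theorem~\ref{snc-main}, the vanishing of $\mathbb H^{s}(Y,\gr^F\DR(\mathcal M_0,\log E))$ for total degree $s > n+k$.

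Finally I would reconcile the two indexing conventions, which is the one point demanding genuine care. The logarithmic de Rham complex in Section~3.1 is unshifted, whereas the complexes appearing in Proposition~\ref{prop-log-comparison} and in the mixed Hodge module convention carry the shift $[n]$; this is precisely what separates the bound $s > n+k$ above from the bound $p+q > k$ claimed here. Tracking the shift through $\gr_{-p}\DR_Y(\tilde{\jmath}_{\ast}\mathcal M) \simeq \gr_{-p}\DR(\mathcal M_0,\log E)[n]$ converts the vanishing in total degree $> n+k$ into $\mathbb H^{p+q}(Y,\gr_{-p}\DR_Y(\tilde{\jmath}_{\ast}\mathcal M)) = 0$ for $p+q > k$; combined with the isomorphism of the second paragraph, this completes the proof. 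The delicate bookkeeping is to match the Hodge grading convention (increasing $F_\bullet$ for mixed Hodge modules versus decreasing $F^\bullet$ for the variation) against this degree shift, so that the index $-p$ and the bound $p+q>k$ come out correctly aligned.
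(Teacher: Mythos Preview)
Your proposal is correct and follows essentially the same strategy as the paper: pass to a log resolution, use the compatibility of graded de Rham with proper push-forward and Proposition~\ref{prop-log-comparison} to reduce to the logarithmic complex on the SNC pair upstairs, and invoke Remark~\ref{keyrem} to make Theorem~\ref{snc-main} applicable to $E=(f^{\ast}D)_{\mathrm{red}}$ even though it need not be $k$-ample. Your write-up is in fact more careful than the paper's about the shift by $n$ and about why the $k$-ampleness hypothesis in Theorem~\ref{snc-main} can be replaced by the conclusion of Remark~\ref{keyrem}.
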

	
\begin{proof}
      % First, we reduce to the case when $X$ is normal. Consider the normalization $\tau : X^{\mathrm{nor}} \to X$ which is clearly an isomorphism over $U$ and let $F = (\tau_{\ast} D)_{\mathrm{red}}$. Therefore, we have inclusions $i : U \to X^{\mathrm{nor}}$ and $j : U \to X$. Since $\tau$ is finite and hence proper, we have the following isomorphisms:
     %  \begin{align*}
	%	& \gr_{p} \DR_{X}(j_\ast \mathcal M) \simeq  \tau_{\ast} \left( \gr_{p} \DR_{X^{\mathrm{nor}}}(i_\ast \mathcal M) \right).
      %  \end{align*}
      % The isomorphism follows by the commutativity of the graded de Rham complex with proper pushforward. So it is enough to show the following vanishing:
   %     \begin{align*}
  %          & \mathbb H^{l}(X^{\mathrm{nor}}, \gr_{p} \DR_{X^{\mathrm{nor}}}(i_\ast \mathcal M)) = 0 \qquad \text{for all } l > k, p \in \mathbb Z.
    %    \end{align*}
    %  Since $F$ is still $k$-ample, it is enough to show the assertion when $X$ is normal.

%Now, we assume that $X$ is normal. 
Consider a log resolution $\mu \colon Y \to X$ of the pair $(X, D)$, an isomorphism over $U$. Let $F = (\mu^{\ast} D)_{\mathrm{red}}$, which is an SNC divisor on $Y$. Write $F = \sum_{i=1}^{s} F_{i}$, we can not deduce that $F$ is the support of some $k$-ample divisor, but according to Remark \ref{keyrem}, we know Theorem \ref{snc-main} is still valid for $F$. Since $\mu$ is proper, we can compute the graded de Rham complex of $j_\ast \mathcal M$ as follows:
	\begin{align*}
		& \gr_{p} \DR_{X}(j_\ast \mathcal M) \simeq \mathbf R \mu_{\ast} \left( \gr_{p} \DR_{Y}(i_\ast \mathcal M) \right) \simeq \mathbf R \mu_{\ast} \left( \gr_{p} \DR_Y(\mathcal M_{0}, \log F) \right).
	\end{align*}
	The isomorphism on the right-hand side follows by a logarithmic comparison (Proposition \ref{prop-log-comparison}). Due to the projection formula, it is enough to show the vanishing of the following hyper cohomology:
	\begin{align*}
		& \mathbb H^{l}(Y, \gr_{p} \DR_Y(\mathcal M_{0}, \log F)).
	\end{align*}
Then we can use Theorem \ref{snc-main}  to get
	$$ \mathbb H^{l}(Y, \gr_{p} \DR_Y(\mathcal M_{0}, \log F)) = 0 \qquad \text{for all } l >k. $$
Noting that we shift $n$ in the de Rham complex.
\end{proof}

At last, we introduce the birational characterization of the Hodge ideal in Musta\c{t}\u{a}--Popa \cite{MP19}. Let $X$ be a smooth complex variety and $D$ be a reduced effective divisor on $X$. Given a log resolution $f: Y  \rightarrow X$ of the pair $(X, D)$ which is an isomorphism over $X\setminus D$, we 
define $E :=(f^{\ast}D)_{\rm red}$.
 We will see in \cite{MP19} that there is a filtered complex of right $f^{-1} \mathcal{D}_X$-modules 
$$0 \longrightarrow f^*\mathcal{D}_X \longrightarrow \Omega_Y^1(\log E) \otimes_{\shO_Y} f^* \mathcal{D}_X \longrightarrow \cdots$$
$$\cdots \longrightarrow \Omega_Y^{n-1}(\log E) \otimes_{\shO_Y} f^* \mathcal{D}_X \longrightarrow K_Y(E) \otimes_{\shO_Y} 
f^*\mathcal{D}_X \longrightarrow 0$$
which is exact except at the rightmost term, where the cohomology is $\omega_Y(*E) \otimes_{\mathcal{D}_Y}  \mathcal{D}_{Y\to X}$; 
here $ \mathcal{D}_{Y\to X}=f^*\mathcal{D}_X$ is the transfer module of $f$.
Denoting the above complex by $A^\bullet$, its filtration is provided by the subcomplexes $F_{k-n}A^\bullet$, for every $k\geq 0$, given by 
$$0 \longrightarrow f^* F_{k-n} \mathcal{D}_X \longrightarrow \Omega_Y^1(\log E) \otimes_{\shO_Y} f^* F_{k-n+1} \mathcal{D}_X \longrightarrow \cdots $$
$$\cdots \longrightarrow \Omega_Y^{n-1} (\log E) \otimes{\shO_Y} f^* F_{k-1} \mathcal{D}_X \longrightarrow  K_Y(E) \otimes_{\shO_Y} f^* F_k \mathcal{D}_X\longrightarrow 0.$$

Musta\c{t}\u{a}--Popa character the \emph{$k$-th Hodge ideal} $I_k(D)$ associated to $D$ by the formula
\begin{equation} \label{defnH}
K_X \big( (k +1)D \big) \otimes I_k(D) = {\rm Im} \left[R^0 f_* F_{k-n}A^\bullet \longrightarrow R^0 f_* A^\bullet \right].
\end{equation}
Not only the image is contained in $\omega_X \big( (k +1)D \big)$, but also this definition is independent of the choice of log resolution, and it indeed coincides with the ideals defined by Saito's Hodge filtration.
The $0$-th Hodge ideal belongs to a class of ideal sheaves that is quite well understood, and has proved to be extremely useful; 
it is not hard to show that
$$\mathcal{I}_0 (D) = \mathcal{I} \big(X, (1- \epsilon)D\big),$$
the multiplier ideal associated to the $\mathbb Q$-divisor $(1-\epsilon)D$ with $0 < \epsilon \ll 1$. 

\begin{lemma} \cite[Proposition $10.1$]{MP19}\label{ideal-I0}
We have 
$$ \mathcal 
I_0 (D) = \mathcal J \big(X, (1-\epsilon)D \big),$$
the multiplier ideal associated to the $\mathbb Q$-divisor $(1-\epsilon)D$ on $X$, for any $0 < \epsilon \ll 1$.
\end{lemma}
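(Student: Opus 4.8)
The plan is to fix a single log resolution $f\colon Y \to X$ of the pair $(X,D)$ that is an isomorphism over $X\setminus D$, with $E = (f^{\ast}D)_{\rm red}$, and to compute both $\mathcal I_0(D)$ and $\mathcal J(X,(1-\epsilon)D)$ as direct images of explicit line bundles on $Y$, and then match them termwise. Since both sides are independent of the chosen resolution, it suffices to verify the identity of ideal sheaves on this fixed $Y$.

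First I would specialize the defining formula \eqref{defnH} to $k=0$. The subcomplex $F_{-n}A^\bullet$ has $j$-th term $\Omega^j_Y(\log E)\otimes_{\shO_Y} f^\ast F_{j-n}\mathcal D_X$, and because the order filtration satisfies $F_p\mathcal D_X = 0$ for $p<0$, every term with $j<n$ vanishes; only the rightmost term $K_Y(E)\otimes_{\shO_Y} f^\ast F_0\mathcal D_X = K_Y(E)$ survives. Hence $F_{-n}A^\bullet$ is the single sheaf $K_Y(E)$ placed in the top degree, so that $R^0 f_\ast F_{-n}A^\bullet = f_\ast K_Y(E) = f_\ast \omega_Y(E)$. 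The defining equation then reads $K_X(D)\otimes \mathcal I_0(D) = \mathrm{Im}\big[f_\ast\omega_Y(E)\to \omega_X(\ast D)\big]$, where the arrow is the natural inclusion. Twisting by the line bundle $\omega_X^{-1}(-D)$ and applying the projection formula, together with $K_Y = f^\ast K_X + K_{Y/X}$, gives
$$\mathcal I_0(D) = f_\ast\big(\omega_Y(E)\otimes f^\ast \omega_X^{-1}(-D)\big) = f_\ast\shO_Y\big(K_{Y/X}+E-f^\ast D\big).$$

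Next I would compute the multiplier ideal from its resolution formula $\mathcal J(X,(1-\epsilon)D) = f_\ast\shO_Y\big(K_{Y/X}-\lfloor (1-\epsilon)f^\ast D\rfloor\big)$. Writing $f^\ast D = \sum_i a_i E_i$ with every $a_i\geq 1$ (as $E = (f^\ast D)_{\rm red}$), for $0<\epsilon<1/\max_i a_i$ one has $\lfloor (1-\epsilon)a_i\rfloor = a_i-1$ for each $i$, so that $\lfloor (1-\epsilon)f^\ast D\rfloor = f^\ast D - E$. Substituting yields $\mathcal J(X,(1-\epsilon)D) = f_\ast\shO_Y\big(K_{Y/X}+E-f^\ast D\big)$, which agrees with the expression for $\mathcal I_0(D)$ obtained above, proving the claim.

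I expect the main obstacle to be the first computation rather than the second. One must justify carefully that the filtered subcomplex $F_{-n}A^\bullet$ collapses to the single sheaf $K_Y(E)$ — this uses both the vanishing of the negative part of the order filtration on $\mathcal D_X$ and the identification $\Omega^n_Y(\log E)=K_Y(E)$ — and that the displayed map $f_\ast\omega_Y(E)\to \omega_X(\ast D)$ is injective with image the twisted ideal, so that the projection-formula identification is legitimate (injectivity follows since $f$ is an isomorphism over $U$ and $f_\ast\omega_Y(E)$ is torsion-free of generic rank one). Once this structural point is in place, the remaining comparison is the elementary rounding computation, where only the uniform smallness bound $\epsilon<1/\max_i a_i$ needs to be recorded.
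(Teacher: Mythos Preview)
Your proposal is correct and follows essentially the same route as the paper's proof: both identify $F_{-n}A^\bullet$ with the single sheaf $K_Y(E)$, deduce $\mathcal I_0(D)=f_\ast\shO_Y(K_{Y/X}+E-f^\ast D)$ via the projection formula, and then match this with the multiplier ideal using $\lfloor(1-\epsilon)f^\ast D\rfloor=f^\ast D-E$. Your version is in fact more explicit about why $F_{-n}A^\bullet$ collapses and about the rounding bound $\epsilon<1/\max_i a_i$, but the argument is the same.
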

\begin{proof}
Recall that $F_{-n} A^{\bullet} = K_Y (E)$, hence by formula \eqref{defnH}
$$F_0 K_X(*D)={\rm Im}\big(f_*K_Y(E)\to K_X(D)\big)=f_*\shO_Y(K_{Y/X} + E - f^*D)\otimes K_X(D).$$
Therefore the statement to be proved is that 
$$f_* \shO_Y (K_{Y/X} + E - f^*D)  = \mathcal I \big(X, (1-\epsilon)D \big).$$
On the other hand, the right-hand side is by definition 
$$f_* \shO_Y \big(K_{Y/X} - \lfloor (1-\epsilon)f^*D\rfloor \big)=f_*\shO_Y\big(K_{Y/X}+(f^*D)_{\rm red}-f^*D\big),$$
which implies the desired equality.
\end{proof}

In particular, $\mathcal{I}_0 (D) = \shO_X$ if and only if the pair $(X, D)$ is log-canonical. Hodge ideals can be computed concretely when $D$ is a simple normal crossing divisor, see \cite[Proposition $8.2$]{MP19}. In particular, if $D$ is smooth, then $I_k (D) = \mathcal O_X$ for all $k \ge 0$, which corresponds to equality between the Hodge filtration and the pole order filtration.

\end{document}